\newcommand{\co}{\mathbb{C}}
\newcommand{\qe}{\mathbb{Q}}
\newcommand{\ze}{\mathbb{Z}}
\newcommand{\cl}[1]{\mathcal{#1}}
\newcommand{\cpt}[1]{\mathbb{C}P^{2}}
\newcommand{\pcn}[1]{\mathbb{P}^{#1}_{ \mathbb{C}}}
\newcommand{\pe}{\mathbb{P}}
\newcommand{\D}{\mathcal{D}}
\newcommand{\B}{\mathcal{B}}
\newcommand{\Ftilde}{\tilde{\mathcal{F}}}
\newcommand{\val}{\mbox{Val}}
\newcommand{\iso}{\mbox{Iso}}
\newcommand{\dic}{\mbox{Dic}}
\newtheorem{theorem}{Theorem}[section]
\newcommand{\sing}{{\rm Sing}}
\newtheorem{proposition}[theorem]{Proposition}
\newtheorem{corollary}[theorem]{Corollary}
\newtheorem{lemma}[theorem]{Lemma}
\theoremstyle{definition}
\newtheorem{definition}[theorem]{Definition}
\newtheorem{example}[theorem]{Example}
\newtheorem{maintheoremI}{Theorem}
\newtheorem*{theorem*}{Theorem}
\newcommand{\res}{\text{\rm Res}}
\newcommand{\tr}{\text{tr}}
\newcommand{\ord}{\text{ord}}
\newcommand{\mc}[1]{\mathcal{#1}}
\newcommand{\F}{\mathcal{F}}
\newcommand{\C}{\mathbb{C}^2}
\newcommand{\sep}{\mbox{\rm Sep}}
\newcommand{\divv}{\mbox{\rm Div}}
\newcommand{\var}{\mbox{\rm Var}}
\newcommand{\gsv}{\mbox{\rm GSV}}
\newcommand{\bb}{\mbox{\rm BB}}
\newcommand{\cs}{\mbox{\rm CS}}
\newcommand{\I}{\mbox{\rm I}}
\begin{document}
\title[Residue-type indices and holomorphic foliations]{Residue-type indices and holomorphic foliations}
\author{Arturo Fern\' andez-P\' erez $\&$ Rog\'erio Mol}
\address{Departamento de Matem\'atica - ICEX, Universidade Federal de Minas Gerais, UFMG}
\curraddr{Av. Ant\^onio Carlos 6627, 31270-901, Belo Horizonte-MG, Brasil.}
\email{arturofp@mat.ufmg.br $\&$ rmol@ufmg.br}
\subjclass[2010]{32S65, 37F75}
\keywords{Holomorphic foliations - Indices of vector fields}
\thanks{Work supported by MATH-AmSud Project CNRS/CAPES/Concytec and by Universal/CNPq.}

\begin{abstract}
 We investigate residue-type indices for germs of   holomorphic foliations in the plane and characterize second type foliations --- those not containing tangent   saddle-nodes in the reduction of singularities ---
 by an expression involving the Baum-Bott,  variation and   polar excess indices. These local results are
 applied in the
 study of logarithmic foliations on compact  complex   surfaces.
\end{abstract}
\maketitle
\section{Introduction}
In 1997 M. Brunella \cite{brunella1997II} proved the following result:
\begin{theorem*}\label{brunella_index}
Let $\F$ be a  non-dicritical germ of holomorphic foliation at $(\C,p)$ and
 let   $S$ denote   the union of all its  separatrices. If $\F$ is a generalized curve foliation then
\[
\bb_{p}(\F)= \cs_{p}(\F,S) \ \ \text{and} \ \  \gsv_{p}(\F,S) = 0.\]
\end{theorem*}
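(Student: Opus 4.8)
The plan is to prove both identities by induction on the length of the reduction of singularities of $\F$, pushing the statement down to reduced singularities. The key input is the equidesingularization theorem of Camacho--Lins Neto--Sad: since $\F$ is non-dicritical and a generalized curve, a reduction of singularities $\pi\colon(\widetilde{M},D)\to(\C,p)$ of $\F$ is at the same time an embedded resolution of the separatrix $S$; the exceptional divisor $D=\pi^{-1}(p)$ is invariant by $\widetilde{\F}=\pi^{*}\F$, every singularity $q$ of $\widetilde{\F}$ on $D$ is a non-degenerate reduced singularity (no saddle-nodes appear), and the full local separatrix $\Sigma_{q}$ of $\widetilde{\F}$ at $q$ --- the strict transforms of $S$ through $q$ together with the local components of $D$ through $q$ --- consists of two smooth, transverse branches. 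The same picture persists at every intermediate center, where one also has the proximity relation $\mathrm{mult}_{p}(S)=\nu_{p}(\F)+1$, with $\nu_{p}(\F)$ the algebraic multiplicity of $\F$ at $p$. I would therefore reduce the proof to two assertions: (i) the two identities hold at a non-degenerate reduced singularity, relative to its full local separatrix; and (ii) the quantities $\bb_{p}(\F)-\cs_{p}(\F,S)$ and $\gsv_{p}(\F,S)$ are unchanged by a single non-dicritical blow-up, once $S$ is replaced afterwards by the full local separatrix set of the transformed foliation.

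For (i) it is a direct computation. At a non-degenerate reduced singularity $q$ the foliation is generated, after a holomorphic change of coordinates, by a vector field $\lambda_{1}x\,\partial_{x}+\lambda_{2}y\,\partial_{y}+\cdots$ with $\lambda_{1}\lambda_{2}\neq 0$ and $\lambda_{1}/\lambda_{2}\notin\qe_{>0}$, whose local separatrices are the axes $B_{1}=\{x=0\}$ and $B_{2}=\{y=0\}$, so $\Sigma_{q}=B_{1}\cup B_{2}$. Then $\bb_{q}(\F)=(\lambda_{1}+\lambda_{2})^{2}/(\lambda_{1}\lambda_{2})$, while $\cs_{q}(\F,B_{1})=\lambda_{1}/\lambda_{2}$ and $\cs_{q}(\F,B_{2})=\lambda_{2}/\lambda_{1}$, so that
\[
\cs_{q}(\F,\Sigma_{q})=\frac{\lambda_{1}}{\lambda_{2}}+\frac{\lambda_{2}}{\lambda_{1}}+2\,(B_{1}\cdot B_{2})_{q}=\frac{(\lambda_{1}+\lambda_{2})^{2}}{\lambda_{1}\lambda_{2}}=\bb_{q}(\F);
\]
and, from the parametrizations $s\mapsto(0,s)$ and $s\mapsto(s,0)$ of the axes, each branch contributes $1$ to the GSV index, so $\gsv_{q}(\F,\Sigma_{q})=1+1-2\,(B_{1}\cdot B_{2})_{q}=0$.

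For (ii), given a non-dicritical blow-up $\sigma\colon(M',E)\to(\C,p)$ the divisor $E$ is $\F'$-invariant ($\F'=\sigma^{*}\F$), and the transformation law $N_{\F'}=\sigma^{*}N_{\F}\otimes\mathcal{O}(-\nu_{p}(\F)\,E)$ yields $\bb_{p}(\F)=\sum_{q\in E}\bb_{q}(\F')+\nu_{p}(\F)^{2}$. The corresponding blow-up formula for the Camacho--Sad index --- obtained by summing the local blow-up relation over the branches of the strict transform of $S$ and absorbing the contributions of $E$ through the Camacho--Sad index theorem $\sum_{q\in E}\cs_{q}(\F',E)=E\cdot E=-1$, using $\mathrm{mult}_{p}(S)=\nu_{p}(\F)+1$ --- should carry the \emph{same} defect, so that $\cs_{p}(\F,S)=\sum_{q\in E}\cs_{q}(\F',\Sigma'_{q})+\nu_{p}(\F)^{2}$ with $\Sigma'_{q}$ the full local separatrix of $\F'$ at $q$. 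Subtracting, the defects cancel and $\bb_{p}(\F)-\cs_{p}(\F,S)=\sum_{q\in E}\bigl(\bb_{q}(\F')-\cs_{q}(\F',\Sigma'_{q})\bigr)$; the parallel blow-up formula for the GSV index, which I expect to be defect-free once one works relative to the full separatrix set, gives $\gsv_{p}(\F,S)=\sum_{q\in E}\gsv_{q}(\F',\Sigma'_{q})$. Since every term on the right belongs to a strictly shorter resolution, induction together with (i) forces both left-hand sides to vanish, which is exactly the assertion of the theorem.

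The hard part will be establishing the two blow-up formulas for the Camacho--Sad and GSV indices with the \emph{precise} defect terms: that the Camacho--Sad defect equals exactly $\nu_{p}(\F)^{2}$, so that it cancels the Baum--Bott defect, and that the GSV index taken relative to the full local separatrix set is genuinely conserved. This is where both hypotheses enter essentially --- non-dicriticality makes $E$ invariant, so the Camacho--Sad index theorem applies on $E$ and disposes of the extra terms, while the generalized-curve hypothesis supplies the clean proximity relation $\mathrm{mult}_{p}(S)=\nu_{p}(\F)+1$ and guarantees that the terminal singularities of the reduction are non-degenerate reduced, where (i) holds. A secondary nuisance is the bookkeeping of the correction terms $2\,(B_{i}\cdot B_{j})$ when several branches of $S$, or of a strict transform, pass through a common point; these have to be matched against the analogous corrections in the additivity formulas for the Camacho--Sad and GSV indices.
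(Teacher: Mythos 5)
Your proposal is correct, and its skeleton --- induction along the reduction of singularities, explicit verification at the terminal reduced singularities, and cancellation of blow-up defects via index theorems on the exceptional line --- is essentially the strategy the paper uses, though the paper does not prove Brunella's statement directly: it derives it as the non-dicritical, saddle-node-free specialization of Theorem I, whose proof runs the same induction on the combined index $\zeta_{p}(\F)=\var_{p}(\F)+\Delta_{p}(\F)$ (Lemma \ref{lemma_reduced} is your step (i), Proposition \ref{zeta-blowup} is your step (ii)), using the variation index theorem \eqref{suwa} and the $\gsv$ sum \eqref{gsv-global-sum} on the exceptional $\pe^{1}$ where you use the Camacho--Sad index theorem. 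What you gain by working directly with $\cs$ and $\gsv$ is a self-contained argument that needs neither balanced divisors nor the polar excess index, since non-dicriticality makes every exceptional component invariant and $S$ the full (finite) separatrix set; what the paper's heavier apparatus buys is the converse implication and the extension to dicritical foliations and to foliations with tangent saddle-nodes, where the defect $\sum_{q}\tau_{q}(\F)^{2}$ appears. For the step you flag as the hard part: your claimed defects are exactly right and the computation closes. Summing the branchwise relations $\cs_{q}(\F',B_{i}')=\cs_{p}(\F,B_{i})-\nu_{i}^{2}$ with the adjunction corrections, Noether's formula $(B_{i}'\cdot B_{j}')_{q}=(B_{i}\cdot B_{j})_{p}-\nu_{i}\nu_{j}$, the contributions $2\sum_{i}(E\cdot B_{i}')=2\nu_{p}(S)$ and $\sum_{q}\cs_{q}(\F',E)=-1$ gives a total Camacho--Sad defect of $(\nu_{p}(S)-1)^{2}$, which equals $\nu_{p}(\F)^{2}$ precisely by the multiplicity relation (Proposition \ref{prop:Equa-Ba} with $\tau_{p}(\F)=0$); the analogous bookkeeping for $\gsv$, using $\sum_{q}\gsv_{q}(\F',E)=\nu_{p}(\F)+1$, yields a defect of $(\nu_{p}(S))(\,\nu_{p}(\F)+2-\nu_{p}(S))-\nu_{p}(\F)-1+ \nu_p(S)$ --- wait, more simply, $\nu_{p}(\F)+1-\nu_{p}(\F)\nu_{p}(S)+\nu_{p}(S)^{2}-2\nu_{p}(S)$, which vanishes identically when $\nu_{p}(S)=\nu_{p}(\F)+1$. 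The only point to make sure you state explicitly is that this multiplicity relation must be invoked at \emph{every} infinitely near point, with $S$ replaced by the full local separatrix set including the exceptional components, which is exactly where the generalized-curve hypothesis enters at each stage of the induction.
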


The foliation $\F$ is said to be a \textit{generalized curve}   if there are no saddle-nodes in its reduction of singularities. This concept was introduced in \cite{camacho1984} and delimits a family of foliations whose topology is closely related to that of their separatrices ---   local invariant curves --- which in this case are all analytic. In the statement of the theorem,
\textit{non-dicritical} means that the separatrices are finite in number. Further, $\bb$, $\cs$ and
$\gsv$ stand for, respectively, the Baum-Bott, the Camacho-Sad and the Gomez-Mont-Seade-Verjovsky
indices.

Generalized curve foliations are part of the broader  family of \textit{second type foliations}, introduced
by J.-F. Mattei and E. Salem in \cite{mattei2004}. Foliations in this family may admit saddle-nodes in the reduction of singularities, provided they are not \textit{tangent saddle-nodes} (Definition
\ref{def-2ndclass}). A second type foliation  satisfies the  remarkable property of getting reduced
once its   set of separatrices --- including the formal ones --- is desingularized.
 Recently,   second type foliations have been the object of some works. We should mention
 \cite{genzmer2007} --- which deals with the ``realization problem'', that is, the existence of foliations with prescribed
reduction of singularities and projective holonomy representations,   \cite{genzmer2016} --- which studies local polar invariants  and applications to the study of the Poincar\'e problem for foliations --- and
 \cite{mol2016} --- where equisingularitiy properties are considered.
 Our main goal in this article is to  give a characterization of second type foliations by means of  residue-type indices, providing a generalization of  Brunella's result.

Our work is strongly based on the notion of \textit{balanced set} or
\textit{balanced equation} of se\-pa\-ra\-tri\-ces (\cite{genzmer2007} and Definition \ref{def-balanced-set}). This   is a geometric objet formed by a finite set of separatrices with weights --- possibly negative, corresponding to poles --- that, in the non-dicritical case, coincides with the whole separatrix set. A balanced set of
separatrices provides a control of the algebraic multiplicity
of the foliation and, for second type foliations, it actually determines it (Proposition
\ref{prop:Equa-Ba}).
In the text, we will preferably see this object as a divisor of formal curves $\B$ --- a  \textit{balanced divisor}
of separatrices --- having a decomposition $\B = \B_{0} - \B_{\infty}$ as a difference of
effective divisors of zeros and poles.

\par To a germ of foliation $\F$   and a finite set of separatrices $C$ --- which can contain purely formal ones --- we associate a triplet of residue-type indices:  the afore mentioned $\cs$-index and   $\gsv$-index, along with the \textit{variation} index $\var $ --- that turns out to be the sum of
 the two fist indices (definitions  in \cite{camacho1982}, \cite{gomezmont1991} and
 \cite{khanedani1997}; see equation \eqref{index-sum} below).
 We then form a quadruplet of indices by including
the \textit{polar excess} index $\Delta$ (\cite{genzmer2016} and Definition \ref{polar_index}).
This one is  calculated by means of polar invariants and   can be seen as a measure of the existence of saddle-nodes in the reduction process of $\F$ (Theorem \ref{polarexcess-zero} and Propostion \ref{total-delta-zero}).
All these indices are subject of a more detailed  discussion in Section \ref{section-indices}.

 Let $\I_{p}(\F,C)$ denote some   index  in the quadruplet.
In the non-dicritical case, if $C$ is the curve formed by the complete set of separatrices, the index is  said to be \textit{total} and is denoted   as $\I_{p}(\F)$.
We extend the notion of   total
index  to dicritical foliations, employing a balanced divisor of separatrices $\B = \B_{0} - \B_{\infty}$
in  place of the curve $C$ in the following way (Definition \ref{def-total-index}):
 $$\I_{p}(\F) := \I_p(\F,\B_{0})-\I_p(\F,B_{\infty}).$$
This definition is particularly well suited to the $\var$-index  and to the $\Delta$-index, since both of
them are additive in the separatrix set.

The main  result of this article is the following:
\begin{maintheoremI}
\label{main-theorem}
\textit{
 Let $\F$ be a germ of holomorphic foliation at $(\C,p)$. Then
 $\F$ is of second type  if and only if
$$\bb_{p}(\F)=\var_{p}(\F)+\Delta_{p}(\F),$$
where $\bb_{p}(\F)$ is the Baum-Bott index,  $\var_{p}(\F)$ and $\Delta_{p}(\F)$ are the
total variation and polar excess indices.
Moreover, $\F$ is a generalized curve foliation if and only if
 $$\bb_{p}(\F)=\var_p(\F).$$
}\end{maintheoremI}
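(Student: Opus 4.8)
The plan is to pass to the reduction of singularities of $\F$ and to localize the asserted identity there, reducing it to a computation at each simple singularity. Throughout, set $\delta_p(\F):=\bb_p(\F)-\var_p(\F)-\Delta_p(\F)$, the three quantities being the total indices of Definition \ref{def-total-index} attached to a balanced divisor of separatrices $\B=\B_0-\B_\infty$ (Definition \ref{def-balanced-set}); the goal is to show $\delta_p(\F)\ge 0$, with equality exactly for second type foliations. First I would record how each of the three total indices behaves under a single blow-up $\pi\colon(\widetilde M,E)\to(\C,p)$. Writing $\widetilde\F=\pi^{*}\F$ and letting $\widetilde\B$ be the balanced divisor of $\widetilde\F$ obtained from $\B$ by strict transform (adjoining $E$ with the weight prescribed by dicriticity), each of $\bb$, $\var$, $\Delta$ obeys a formula of the shape ``value at $p$ $=$ sum of the values over $E$ $+$ divisor term'': the divisor term is quadratic in the algebraic multiplicity $\nu_p(\F)$ and the dicriticity indicator $\epsilon_p\in\{0,1\}$ for $\bb$ (the classical blow-up formula for the Baum--Bott index), and an analogous expression in $\nu_p(\B_0),\nu_p(\B_\infty),\epsilon_p$ for $\var$ (through $\var=\cs+\gsv$, equation \eqref{index-sum}) and for $\Delta$, using the additivity of these two indices in the separatrix set. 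Subtracting, the divisor terms contribute to $\delta$ a quantity that is non-negative and vanishes exactly when the balanced divisor computes the algebraic multiplicity, which by Proposition \ref{prop:Equa-Ba} is the situation for second type germs; hence $\delta_p(\F)$ decomposes as this non-negative ``discrepancy'' plus $\sum_{q\in E}\delta_q(\widetilde\F)$. Iterating over the whole resolution tree reduces the problem to the case of a reduced germ.

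It then remains to evaluate $\delta_q(\widetilde\F)$ at each reduced singularity $q$ of the resolved foliation, with respect to the transformed balanced divisor, whose local branches at $q$ are among the components of the exceptional divisor and of the strict transform of $\B$. If $q$ is non-degenerate --- including the case of a node --- then $\gsv_q(\widetilde\F,\cdot)=0$ and $\bb_q(\widetilde\F)=\cs_q(\widetilde\F,\cdot)=\var_q(\widetilde\F,\cdot)$ by the residue computation underlying Brunella's theorem, while $\Delta_q(\widetilde\F)=0$ in the absence of saddle-nodes; thus $\delta_q(\widetilde\F)=0$. The decisive case is the saddle-node: from the Dulac normal form $x^{k+1}\,dy-y(1+\lambda x^{k})\,dx$ one computes $\bb_q$, together with $\cs_q$ and $\gsv_q$ along the strong and the weak separatrix, reconstructs $\var_q(\widetilde\F,\cdot)$ according to whether the exceptional divisor at $q$ is the strong separatrix, the weak separatrix, or a corner, and evaluates $\Delta_q(\widetilde\F)$ from the polar invariants (Definition \ref{polar_index}). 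One then checks that $\delta_q(\widetilde\F)=0$ when the saddle-node is not tangent to the exceptional divisor (Definition \ref{def-2ndclass}) --- its weak separatrix being transverse to the divisor, hence already recorded in $\widetilde\B$ --- while $\delta_q(\widetilde\F)>0$ when it is tangent, the surplus being precisely the order of tangency of the weak separatrix with the divisor. This case analysis, and in particular showing that the polar-excess term absorbs the whole of $\bb_q-\var_q$ in the non-tangent case but falls short of it by exactly that order of tangency in the tangent case, is the step I expect to be the main obstacle.

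Combining the two steps, $\delta_p(\F)\ge 0$, and $\delta_p(\F)=0$ if and only if the reduction of $\F$ contains no tangent saddle-node and the balanced divisor computes the algebraic multiplicity at every stage, that is, if and only if $\F$ is of second type; this is the equivalence $\bb_p(\F)=\var_p(\F)+\Delta_p(\F)\Leftrightarrow\F$ of second type. The generalized curve statement then follows formally. By Theorem \ref{polarexcess-zero} and Proposition \ref{total-delta-zero} one has $\Delta_p(\F)\ge 0$, with $\Delta_p(\F)=0$ if and only if $\F$ is a generalized curve. If $\F$ is a generalized curve it is in particular of second type, so $\bb_p(\F)=\var_p(\F)+\Delta_p(\F)=\var_p(\F)$. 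Conversely, if $\bb_p(\F)=\var_p(\F)$ then $\Delta_p(\F)+\delta_p(\F)=\bb_p(\F)-\var_p(\F)=0$ with both summands non-negative, forcing $\Delta_p(\F)=0$, and hence $\F$ is a generalized curve. This would complete the proof.
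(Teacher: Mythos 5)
Your overall architecture coincides with the paper's: track $\delta_p(\F)=\bb_p(\F)-\var_p(\F)-\Delta_p(\F)$ through the reduction of singularities via the blow-up formulas for the three total indices, identify the per-blow-up discrepancy as a square of the quantity $\nu_p(\F)-\nu_p(\B)+1=\tau_p(\F)$ (non-negative, vanishing exactly for second type germs by Proposition \ref{prop:Equa-Ba}), reduce to the case of simple singularities, and then deduce both equivalences using $\Delta_p(\F)\ge 0$ and Proposition \ref{total-delta-zero}. This is precisely the content of Proposition \ref{zeta-blowup}, Lemma \ref{lemma_reduced} and Theorem \ref{theorem-bb-zeta} in the paper.

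There is, however, a concrete error in the step you yourself single out as the main obstacle. You predict that at a reduced point $q$ one has $\delta_q(\tilde\F)=0$ for non-tangent saddle-nodes but $\delta_q(\tilde\F)>0$ for tangent ones. With the local balanced divisor you describe --- strict transform of $\B$ together with the exceptional components through $q$, which at a tangent saddle-node is exactly the full local separatrix set, the weak separatrix being a branch of the exceptional divisor --- the normal form \eqref{saddle-node-formal} gives $\var_q(\tilde\F)=k+2+\lambda$, $\Delta_q(\tilde\F)=\Delta(\text{strong})+\Delta(\text{weak})=0+k$ and $\bb_q(\tilde\F)=2k+2+\lambda$, so that $\delta_q(\tilde\F)=0$ at \emph{every} reduced singularity, tangent saddle-node or not; this is Lemma \ref{lemma_reduced}, and it is insensitive to which separatrix lies in the divisor. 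The whole defect is therefore carried by the blow-up discrepancies $\tau_q(\tilde\F)^2$ --- the tangency excess enters there through the identity $\sum_j\Delta_{q_j}(\tilde\F,D)=\nu_p(\F)+1-\nu_p(\B)=\tau_p(\F)$, obtained from Theorem \ref{polarexcess-gsv} and Proposition \ref{prop:Equa-Ba} --- and not by an extra surplus at the reduced level. If both of your claims were taken literally you would count the tangent saddle-nodes twice and land on a formula different from $\bb_p(\F)=\var_p(\F)+\Delta_p(\F)+\sum_{q}\tau_q(\F)^2$. The qualitative conclusion would survive, since every term in the correct decomposition is still non-negative and vanishes exactly in the second type case, but the case analysis you plan to carry out would not come out as you expect, so the proof as proposed is not yet correct.
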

Indeed, for an arbitrary foliation, we can evaluate the difference of the left and right sides of the expression in  the theorem  as a non-negative integer that assembles the contribution of tangent saddle-nodes along the reduction of singularities.
 This is done in Theorem \ref{theorem-bb-zeta}, from which   Theorem \ref{main-theorem} is a corollary.
In the non-dicritical case, $\Delta_{p}(\F) = \gsv_{p}(\F)$ (Theorem \ref{polarexcess-gsv}) and $\F$ s a generalized curve foliation if and only
if $\gsv_{p}(\F)=0$. Theorem \ref{main-theorem} thus recovers the statement of Brunella's theorem
simultaneously providing its converse: a non-dicritical
$\F$ is a  generalized curve foliation if and only if $\bb_{p}(\F)=\cs_p(\F)$.

The article is structured as follows. In   section \ref{basic-section}
  we present some
basic definitions and properties of local foliations with a  specific view on  second type  foliations.
Section \ref{section-indices} is a brief review   on residue-type indices, where
we explain the case of   formal separatrices  and define the total index.
In section \ref{second-variation-section} we introduce a new invariant, the \textit{second variation} index --- the sum of the variation and  polar excess indices --- and  calculate its change
by blow-up maps. Then,
in section \ref{proof-theoremI-section}, we compare second variation and Baum-Bott indices
(Theorem \ref{theorem-bb-zeta}) and derive the proof of Theorem \ref{main-theorem}. Next,
as an application of Theorem \ref{main-theorem},
 we obtain in section \ref{logarithmic-section}
a characterization of non-dicritical logarithmic foliations 
in terms of second type foliation, both in the complex projective plane (Proposition \ref{prop-logarithmic}) and in the more general setting of projective surfaces with infinite cyclic Picard group (Proposition \ref{prop-logarightmic-picZ}).
We close this article by presenting,   in section \ref{examples-section},   numerical data of a pair of examples.

 \section{Basic definitions and notation}
 \label{basic-section}

\par  In order to fix a terminology and a notation, we recall  some basic concepts of local foliation theory. Let $\F$ be a holomorphic foliation with isolated singularities on a complex surface $X$. Let $p\in X$ be a singular point of $\F$. In local coordinates $(x,y)$ centered at $p$, the foliation is given by an analytic vector field
\begin{equation}
\label{vectorfield}
v=F(x,y)\frac{\partial}{\partial{x}}+G(x,y)\frac{\partial}{\partial{y}}
\end{equation}
or by its dual $1-$form
\begin{equation}
\label{oneform}
\omega = G(x,y) d x - F(x,y) d y,
\end{equation}
where  $F, G   \in {\mathbb C}\{x,y\}$ are relatively prime.

A  \textit{separatrix} for  $\cl{F}$ is an invariant formal irreducible curve, that is,
an object given
 by an irreducible formal series
$f\in {\mathbb C}[[x,y]]$   satisfying
$$
\omega\wedge df=(fh) dx\wedge dy
$$
for some formal series
$h\in {\mathbb C}[[x,y]]$.
The separatrix is said to be {\em analytic} or {\em convergent} if
we can take $f\in {\mathbb C}\{x,y\}$. It is said to be \textit{purely formal} otherwise.
We denote by $\sep_p(\F)$ the set of all separatrices   of $\F$ at $p$.

We say that $p \in \co^{2}$  is a {\em reduced} or {\em simple} singularity for $\cl{F}$  if the
 linear part ${\rm D} v(p)$ of the vector field $v$ in \eqref{vectorfield}  is non-zero and  has eigenvalues $\lambda_1, \lambda_2 \in \co$ fitting in one of the two cases:
\begin{enumerate}[(i)]
\item  $\lambda_1 \lambda_2 \neq 0$ and $\lambda_1 / \lambda_2 \not \in \qe^+$  \
({\em non-degenerate} or {\em complex hyperbolic} singularity). \smallskip
\item $\lambda_1 \neq 0$ and  $\lambda_2= 0$ \ ({\em saddle-node} singularity).
\end{enumerate}

\par In case (i), there are analytic coordinates in $(x,y)$ in which $\F$ is induced by the equation
\begin{equation}
\label{non-degenerate}
\omega=x(\lambda_1+a(x,y))dy-y(\lambda_2+b(x,y))dx,
\end{equation}
where $a,b  \in {\mathbb C}\{x,y\}$ are non-unities, so that  $\sep_p(\F)$ is formed by two
transversal analytic branches given by $\{x=0\}$ and $\{y=0\}$. In case (ii), up to a formal change of coordinates, the  saddle-node singularity is given by a $1-$form of the type
\begin{equation}
\label{saddle-node-formal}
\omega = y(1 + \lambda x^{k})dx + x^{k+1} dy,
\end{equation}
where $\lambda \in \mathbb{C}$ and $k \in \mathbb{Z}_{>0}$ are formal invariants  \cite{martinet1982}.
The curve $\{x=0\}$   is an analytic separatrix, called {\em strong}, whereas $\{y=0\}$  corresponds to a possibly formal separatrix, called {\em weak} or {\em central}.
The integer $k+1 > 1$ is called the {\em tangency index} of $\F$ with respect to the
weak separatrix,   {\em weak index} for short, and will be denoted as
$\text{Ind}_{p}^{w}(\F)$.

 Let  $\pi:(\tilde{X},\D)\to(\C,p)$ be a composition of blow-up maps. The divisor  $\D$
is a finite union of components which are  embedded projective lines, crossing normally at {\em corners}.
If  $\F$ is the foliation defined by the $1-$form $\omega$, we denote by
$\tilde{\F} = \pi^{*} \cl{F}$ the {\em strict transform} of $\cl{F}$, the germ of foliation on $(\tilde{X},\D)$ defined locally by $\pi^{*} \omega$, obtained after cancelling the one-dimensional singular components. For a uniform analysis, we include the possibility of $\pi$ being  the identity map and, abusing notation, we set in this case $\tilde{X} = \co^2$,
$\D = \{p\}$ and $\Ftilde = \F$.

With respect to the the divisor $\D$,
   the foliation $\Ftilde$  at a point $q \in \D$  can be:
\begin{itemize}
\item {\em regular},    if there
are local analytic coordinates $(x,y)$ at $q$ such that $\D\subset \{xy=0\}$
and   $\Ftilde: dx = 0$;
\item {\em singular}, if it is not regular;
\item   {\em reduced} or {\em simple},    if
  $q$ is a reduced singularity for $\Ftilde$
and $\D \subset \sep_{q}(\Ftilde)$.
\end{itemize}
For simplicity, we employ the terminology $\D$-regular, $\D$-singular and $\D$-reduced. When $\D = \{p\}$, these notions coincide with
the ordinary concepts of regular point, singular point and reduced singularity.
We say that  $\pi: (\tilde{X},\D) \to
(\co^2,p)$ is   a  {\em reduction of singularities} or {\em desingularization}  for   $\cl{F}$ if
  all points $q \in \D$ are either  $\D$-regular or $\D$-reduced singularities.
There always exists a reduction of singularities
\cite{seidenbeg1968,camacho1984}.
Besides,   there exists a {\em minimal}  one, in the sense that it factorizes
any other reduction of singularities  by an additional sequence of blow-ups.
All along this text,   reductions of singularities are supposed to be minimal.

Given a germ of foliation $\F$ at $(\C,p)$ we introduce
 the set $\cl{I}_{p}(\F)$ of \textit{infinitely near points} of $\F$ at $p$. This  is
defined   in a  recursive way along the reduction of singularities of $\F$.
We do as follows. Given a sequence of blow-ups $\pi: (\tilde{X},\D) \to
(\co^2,0)$ --- an intermediate step in the reduction process --- and a point $q \in \D$ we set:
\begin{itemize}
\item  if $\tilde{\F}$ is $\D$-reduced at $q$, then $\cl{I}_{q}(\tilde{\F}) = \{q\}$;
\item  if  $\tilde{\F}$ is $\D$-singular but not $\D$-reduced at $q$,
we perform a  blow-up $\sigma:( \hat{X}  ,\hat{\D}) \to (\tilde{X},\D)$ at $q$, where $\hat{\D} = \sigma^{-1}(\D) = (\sigma^{*}\D ) \cup D$ and
  $D = \sigma^{-1}(q)$.
If $q_{1}, \ldots, q_{\ell}$ are all $\hat{\D}$-singular points of $\hat{\F} = \sigma^{*} \tilde{\F}$ on $D$,
then $$\cl{I}_{q}(\tilde{\F}) = \{q\} \cup \cl{I}_{q_{1}}(\hat{\F}) \cup \ldots \cup \cl{I}_{q_{\ell}}(\hat{\F}).$$
\end{itemize}
In order to simplify notation, we settle that  a  numerical invariant for a foliation $\F$  at $q \in \cl{I}_{p}(\F)$    actually means the same invariant computed
  for the transform of $\F$ at $q$. Context will make this clear.

For a fixed a reduction process $\pi:(\tilde{X},\D)\to(\C,p)$ for $\F$, a component  $D \subset \D$ can be:
\begin{itemize}
\item {\em non-dicritical}, if $D$ is $\tilde{\F}$-invariant. In this case, $D$ contains a finite number of simple singularities. Each non-corner singularity carries a separatrix   transversal to $D$, whose projection by $\pi$ is a curve in $\sep_{p}(\cl{F})$.
\item {\em dicritical}, if $D$ is not $\tilde{\F}$-invariant. The definition
of reduction of singularities gives that $D$ may intersect only non-dicritical components and that $\tilde{\F}$ is everywhere transverse do $D$. The $\pi$-image of a local leaf of $\tilde{\F}$ at each non-corner point of $D$ belongs to $\sep_{p}(\cl{F})$.
\end{itemize}
Denote by   $\sep_{p}(D) \subset \sep_{p}(\cl{F})$ the set of separatrices whose transforms
 by $\pi$ intersect the
component $D \subset \D$. If $B \in \sep_{p}(D)$ with $D$ non-dicritical, $B$ is said to be \textit{isolated}. Otherwise, it is said to be a \textit{dicritical separatrix}.
This engenders the   decomposition $\sep_{p}(\cl{F}) = \iso_{p}(\cl{F}) \cup \dic_{p}(\cl{F})$, where notations are self evident.
The set $\iso_{p}(\cl{F})$  is finite and  contains all   purely
formal separatrices. It   subdivides further    in two classes:
 \textit{weak} separatrices --- those arising from the weak separatrices of saddle-nodes --- and \textit{strong} separatrices --- corresponding to strong separatrices
of saddle-nodes and separatrices of non-degenerate singularities. On the other hand, if non-empty, $\dic_{p}(\cl{F})$ is an infinite set of analytic
separatrices. A foliation  $\cl{F}$ is said to be {\em  dicritical}
  when $\sep_{p}(\cl{F})$ is infinite, which is equivalent to saying that $\dic_{p}(\cl{F})$ is non-empty. Otherwise, $\cl{F}$ is called {\em non-dicritical}.

Along the text, we would rather adopt the language of \textit{divisors} of formal curves.
More specifically, a \textit{divisor of separatrices} for a foliation $\F$ at $(\C,p)$ is
a formal sum
\[\B = \sum_{B \in \text{Sep}_{p}(\F)} a_{B} \cdot B \]
where the coefficients $a_{B} \in \ze$ are zero except for finitely many $B \in \sep_{p}(\F)$.
We denote by $\divv_{p}(\F)$ the set of all these divisors, which turns into a group with the canonical additive structure.
We follow  the usual terminology and notation:
\begin{itemize}
\item $\B \geq 0$ denotes an \textit{effective} divisor, one whose  coefficients are all  non-negative;
\item   there is a unique decomposition $\B = \B_{0} - \B_{\infty}$, where $\B_{0}, \B_{\infty} \geq 0$ are respectively the \textit{zero}
and \textit{pole} divisors of $\B$;
\item the \textit{algebraic multiplicity} of   $\B$ is
$\nu_{p}(\B) =  \sum_{B \in \text{Sep}_{p}(\F)} a_{B}.$
\end{itemize}
Given a    formal meromorphic equation $\hat{F}$, whose irreducible components define
 separatrices  $B_i$ with multiplicities $\nu_{i}$, we associate   the divisor
$ (\hat{F}) = \sum_{i} \nu_{i} \cdot B_{i}$.
A curve of separatrices $C$, associated to a reduced equation $\hat{F}$, is identified to
the divisor $C = (\hat{F})$. Such an effective divisor is named \textit{reduced}, that is,
all coefficients are either $0$ or $1$. In general,  $\B \in \divv_{p}(\F)$  is reduced if both
$\B_{0}$ and $\B_{\infty}$ are reduced effective divisors.
A divisor $\B$ is said to be \textit{adapted} to a curve of separatrices $C$ if $\B_0 - C \geq 0$.
Finally, the usual intersection number for formal curves at $(\C,p)$, denoted by $(\,\cdot\, , \,\cdot\, )_{p}$,
is canonically extended in a bilinear way to divisors of curves.

Let $\F$ be a germ of foliation at $(\C,p)$ with reduction process $\pi:(\tilde{X},\D)\to (\C,p)$ and let $\tilde{\F} = \pi^{*} \F$ be the strict transform foliation.
A saddle-node  singularity $q \in \sing(\Ftilde)$ is
is said to be a \textit{tangent saddle-node} if  its   weak separatrix is contained in the exceptional divisor $\D$.
We have the following definition  \cite{mattei2004}:

\begin{definition}
\label{def-2ndclass}
 A foliation   is in the \textit{second class} or is \textit{of second type} if there
are no    tangent saddle-nodes in its reduction process.
\end{definition}

 Given a
a component $D \subset \D$, we denote by  $\rho(D)$ its multiplicity,    which coincides with
the algebraic multiplicity of a curve $\gamma$ at $(\mathbb{C}^{2},p)$ whose transform $\pi^{*} \gamma$ meets $D$ transversally
outside a corner of $\D$. The following invariant is a measure of the existence of tangent saddle-nodes in the reduction of singularities of a foliation:

\begin{definition}{\rm
 The \emph{tangency excess} of $\F$   is the number
\[\tau_{p}(\F)=\sum_{q \in \textsl{SN}(\F) }\rho(D_{q})(\text{Ind}_{q}^{w}(\Ftilde)  -1),\]
where  $\textsl{SN}(\F)$ stands for  the set of tangent saddle-nodes  on $\D$ and, if $q \in \textsl{SN}(\F)$, we denote by  $D_q$  the component of $\D$ containing its weak separatrix and by
 $\text{Ind}_{q}^{w}(\Ftilde) > 1$ its weak index.
}\end{definition}
Off course, $\tau_{p}(\F) \geq 0$ and, by definition, $\tau_{p}(\F) = 0$ if and only if $\textsl{SN}(\F) = \emptyset$, that is, if and only if  $\F$ is of second type.
We introduce the following object  \cite{genzmer2007,genzmer2016}:
\begin{definition}{\rm
\label{def-balanced-set}
  A \emph{balanced divisor of separatrices}  for $\F$  is a divisor of the form
\[ \B \ = \
\sum_{B\in {\rm Iso}_p(\F)} B+ \sum_{B\in {\rm Dic}_{p}(\F)}\ a_{B}  \cdot B,\]
where the coefficients $a_{B} \in \mathbb{Z}$  are  non-zero except for finitely many $B \in
\dic_{p}(\F)$, and, for each  dicritical  component $D \subset \D$,
 the following equality is respected:
\[\sum_{B \in {\text{Sep}_{p}(D)}}a_{B} = 2- \val(D).\]
  The integer $\val(D)$ stands for the {\em valence} of a component $D \subset \D$ in the reduction process, that is,  it is the   number of  components of $\D$ intersecting $D$ other from $D$ itself.
}\end{definition}

A balanced divisor $\B$ is called \textit{primitive} if, for every dicritical component $D \in \D$ and
every $B \in \sep_{p}(D)$, either $-1 \leq a_{B} \leq 0$ or $0 \leq a_{B} \leq 1$.
Recall that a balanced divisor $\B$ is \textit{adapted} to a curve of separatrices $C$ if $\B_{0} - C \geq 0$.
A \textit{balanced equation of separatrices} is a formal meromorphic function $\hat{F}$ whose associated divisor is a balanced divisor. A balanced equation is \textit{reduced}, \textit{primitive} or \textit{adapted} to a curve $C$ if the same is true for the underlying divisor.

The tangency excess   measures the extent that a balanced divisor of separatrices computes the
 algebraic multiplicity, as expressed in the following result \cite{genzmer2007}:

\begin{proposition}
\label{prop:Equa-Ba} Let $\F$ be a germ of singular foliation at $(\C,p)$ with $\B$ as a balanced divisor of separatrices.   Denote by $\nu_{p}(\F)$ and $\nu_{p}(\B)$ their
algebraic multiplicities. Then
\[\nu_{p}(\F)=\nu_{p}(\B)-1+\tau_{p}(\F).\]
Moreover,
\[\nu_{p}(\F)=\nu_{p}(\B)-1\]
if and only if $\F$ is a second type foliation.
\end{proposition}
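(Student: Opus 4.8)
The plan is to establish the identity $\nu_{p}(\F)=\nu_{p}(\B)-1+\tau_{p}(\F)$ by induction on the number $N$ of blow-ups in the minimal reduction of singularities of $\F$; the ``moreover'' statement is then immediate, since $\tau_{p}(\F)\geq 0$ and, as recalled just above, $\tau_{p}(\F)=0$ precisely when $\F$ is of second type. I would first observe that $\nu_{p}(\B)$ does not depend on the chosen balanced divisor: if $B\in\sep_{p}(\F)$ and $D_{B}\subset\D$ is the component met transversally, off a corner, by the strict transform of $B$, then $\nu_{p}(B)=\rho(D_{B})$ by the definition of $\rho$; now two balanced divisors agree on $\iso_{p}(\F)$ and, on each dicritical component $D'$, the coefficients of the separatrices of $\sep_{p}(D')$ --- all of multiplicity $\rho(D')$ --- sum to the common value $2-\val(D')$, so the two algebraic multiplicities coincide. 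Hence one is free to pick a convenient balanced divisor at each stage of the induction.

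The base case $N=0$ is direct: a reduced singularity has $\nu_{p}(\F)=1$ and exactly two separatrices (the two branches of a non-degenerate point, or the strong and the possibly formal weak branch of a saddle-node), so the unique balanced divisor has $\nu_{p}(\B)=2$, while $\tau_{p}(\F)=0$ because the exceptional divisor is a single point; thus $\nu_{p}(\F)=1=\nu_{p}(\B)-1+\tau_{p}(\F)$. For $N\geq 1$, let $\sigma\colon(\widetilde{X},D)\to(\C,p)$ be the blow-up at $p$, $\widetilde{\F}=\sigma^{*}\F$, and $q_{1},\dots,q_{r}$ the singular points of $\widetilde{\F}$ on $D$; each germ $(\widetilde{\F},q_{i})$ has a strictly shorter reduction, so by induction $\nu_{q_{i}}(\widetilde{\F})=\nu_{q_{i}}(\B^{(i)})-1+\tau_{q_{i}}(\widetilde{\F})$ for any balanced divisor $\B^{(i)}$ of $(\widetilde{\F},q_{i})$. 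I would then combine three transfer facts. (i) The algebraic multiplicity: from the factorisation $\sigma^{*}\omega=x^{\nu_{p}(\F)}\widetilde{\omega}$ when $D$ is non-dicritical, and $\sigma^{*}\omega=x^{\nu_{p}(\F)+1}\widetilde{\omega}$ when $D$ is dicritical, one reads off $\nu_{p}(\F)=\sum_{i}\mathrm{tang}_{q_{i}}(\widetilde{\F},D)-1$, resp.\ $+1$, where $\mathrm{tang}_{q}(\widetilde{\F},D)$ is the tangency index of $\widetilde{\F}$ with $D$ at $q$. (ii) The balanced divisor: when $D$ is non-dicritical it is an isolated separatrix of every $(\widetilde{\F},q_{i})$ (its strict transform never ends at a corner), and choosing $\B^{(i)}=D+\sum_{j}a_{j}\,\sigma^{*}B_{j}$ with the $a_{j}$ inherited from a balanced divisor $\B$ of $\F$ --- after the finitely many coefficient corrections forced by the change of valence of the dicritical components meeting $D$ --- one gets, via $(\sigma^{*}B_{j},D)_{q_{i}}=\nu_{p}(B_{j})$, that $\nu_{p}(\B)=\sum_{i}(\B^{(i)}-D,D)_{q_{i}}$ up to those corrections; when $D$ is dicritical, $D$ contributes no separatrix to $\F$ and the defining relation $\sum_{B\in\sep_{p}(D)}a_{B}=2-\val(D)$ carries out the analogous balancing. (iii) The tangency excess: $\tau_{p}(\F)$ splits as $\sum_{i}\tau_{q_{i}}(\widetilde{\F})$ plus the tangent saddle-nodes whose weak separatrix lies on a transform of $D$, and these last are precisely the ones producing the gap between $\mathrm{tang}_{q_{i}}(\widetilde{\F},D)$ and $(\B^{(i)}-D,D)_{q_{i}}$ in (ii); here one uses that every transform of $D$ has $\rho=1$ and that $\rho$ is unchanged upon moving the centre to an infinitely near point on $D$. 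Substituting (ii) and (iii) into (i) gives $\nu_{p}(\F)=\nu_{p}(\B)-1+\tau_{p}(\F)$.

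The hard part will be exactly this bookkeeping: keeping simultaneous track of tangency indices, weak indices, valences and algebraic multiplicities through one blow-up, and verifying that the corrections forced by dicritical components meeting the new divisor cancel against the tangent-saddle-node terms of (iii). The dicritical first blow-up is the delicate case --- there the ``$+1$'' of (i) replaces the ``$-1$'', the exceptional divisor supplies no separatrix of $\F$, and some separatrices in a balanced divisor carry negative weights --- so getting every constant to cancel is where the real effort lies.
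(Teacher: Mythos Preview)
The paper does not give its own proof of this proposition: it is quoted, with attribution to \cite{genzmer2007}, and then used as a tool (notably in the proof of Proposition~\ref{zeta-blowup}). So there is nothing in the paper to compare your argument against; your inductive scheme along the reduction process is the natural route and is presumably close in spirit to the original proof.

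That said, your step (iii) has a genuine gap. You assert that $\tau_{p}(\F)$ equals $\sum_{i}\tau_{q_{i}}(\tilde{\F})$ plus the contribution of the tangent saddle-nodes whose weak separatrix is a transform of $D$, invoking that ``$\rho$ is unchanged upon moving the centre to an infinitely near point on $D$''. This is false in general. The weights in $\tau_{p}(\F)$ are the multiplicities $\rho_{p}(D_{q})$ measured from $p$, while those in $\tau_{q_{i}}(\tilde{\F})$ are $\rho_{q_{i}}(D_{q})$ measured from $q_{i}$; these agree only when none of the blow-ups producing $D_{q}$ is centred on the strict transform of $D$. The very first time the reduction over $q_{i}$ blows up a corner with the transform of $D$ (which is unavoidable whenever a separatrix of $\tilde{\F}$ at $q_{i}$ is tangent to $D$), the new component gains one unit in $\rho_{p}$ but not in $\rho_{q_{i}}$. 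Hence for a tangent saddle-node $q$ with $D_{q}\neq D$ one may have $\rho_{p}(D_{q})>\rho_{q_{i}}(D_{q})$, and your decomposition of $\tau_{p}(\F)$ is missing the term $\sum_{q}(\rho_{p}(D_{q})-\rho_{q_{i}}(D_{q}))(\mathrm{Ind}^{w}_{q}-1)$.

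This defect is tied to a second problem: as written, the combination of (i), (ii), (iii) does not actually use the induction hypothesis. You extract $\nu_{p}(\F)+1=\sum_{i}\mathrm{tang}_{q_{i}}(\tilde{\F},D)$ and $\nu_{p}(\B)=\sum_{i}(\B^{(i)}-D,D)_{q_{i}}$ directly, and if (iii) held literally you would obtain $\nu_{p}(\F)=\nu_{p}(\B)-1+\tau_{p}(\F)-\sum_{i}\tau_{q_{i}}(\tilde{\F})$, which is the desired identity only when every $\tau_{q_{i}}$ vanishes. What is missing is a relation linking $\mathrm{tang}_{q_{i}}(\tilde{\F},D)$ and $(\tilde{\B}_{i},D)_{q_{i}}$ to $\nu_{q_{i}}(\tilde{\F})$ and $\nu_{q_{i}}(\tilde{\B}_{i})$; that relation is exactly where both the induction hypothesis and the extra $\rho$-discrepancy terms must enter and cancel. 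Your closing paragraph correctly flags the dicritical first blow-up as delicate, but the non-dicritical bookkeeping already needs to be redone before that.
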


\section{Indices of foliations}
\label{section-indices}
In this section we briefly recall definitions and main properties of  some indices associated to  singular  plane   foliations, following the presentation in \cite{brunella1997II}. Some of these indices are calculated with respect to  invariant analytic curves  and we   explain   how  to extend  their definitions to   formal invariant curves.
We shall also present the   \textit{polar excess} index, introduced in \cite{genzmer2016}.
In our exposition, invariant curves are identified   with   reduced divisors of separatrices.
Calculations and definitions   apply  to germs of foliations   lying   on a complex surface, but    we can transfer them   to the complex plane by taking  local analytic coordinates.
\subsection{The Baum-Bott index}
Let $\F$ be a germ of foliation   defined either by a holomorphic  vector field $v$ as in \eqref{vectorfield} or by a holomorphic $1-$form $\omega$ as
in  \eqref{oneform}.
If $J(x,y)$ denotes the Jacobian matrix  of   $(F,G)$ in the variables $(x,y)$, then the following residue  defines the \textit{Baum-Bott index} at $p\in\sing(\F)$ \cite{baum1972}:
$$\bb_{p}(\F)=\res_{p}\left\{\frac{(\tr J)^2}{F\cdot G}dx\wedge dy\right\}.$$
For a reduced   singularity   with local models \eqref{non-degenerate} and \eqref{saddle-node-formal}, this becomes:
\begin{equation}
\bb_{p}(\F) =
\begin{cases} \displaystyle \frac{(\tr J(p))^2}{\det J(p)}=\frac{\lambda_1}{\lambda_2}+\frac{\lambda_2}{\lambda_1}+2 & \text{if $p$ is non-degenerate};
\medskip \\
2k+2+\lambda & \text{if $p$ is a saddle-node}.
\end{cases}
\end{equation}

On a compact surface $M$, the sum of Baum-Bott indices of a foliation $\F$ is expressed in terms of
the first Chern class of the normal bundle $N_{\F}$ of the foliation \cite{baum1972,brunella1997I}:
\begin{equation}
\label{baumbott}
 \sum_{p\in\sing(\F)}\bb_{p}(\F)=c^{2}_1(N_{\F}).
\end{equation}

\subsection{The Camacho-Sad index}

\par
Let $C$ be an invariant analytic curve for $\F$   defined by a reduced function $f \in {\mathbb C}\{x,y\}$.
Then there are germs  $g, k   \in {\mathbb C}\{x,y\}$, with $k$ and $f$   relatively prime, and a germ of
analytic   $1-$form $\eta$ such that
\begin{equation}
\label{decomposition}
g\omega=kdf+f\eta
\end{equation}
(see, for instance, \cite{linsneto1988,suwa1998}).
The Camacho-Sad index \cite{camacho1982} is the residue
\begin{equation}
\label{camachosaddefinition}
\cs_{p}(\F,C) = -\frac{1}{2\pi i}\int_{\partial{C}}\frac{1}{k}\eta.
\end{equation}
The integral is over $\partial{C}=C\cap S^3$,  the link of $C$ oriented as the boundary of $C\cap B^4$, where $B^{4}$ is a small ball centered at $0 \in \co^{2}$
and $S^{3} = \partial B^{4}$.
If $C_{1}$ and $C_{2}$ are   $\F$-invariant curves without common components, then the following adjunction formula holds:
\begin{equation}
\label{cs-adjunction}
\cs_{p}(\F,C_{1}+ C_{2}) = \cs_{p}(\F,C_{1} ,p) + \cs_{p}(\F,C_{2},p) + 2(C_{1} \cdot C_{2})_{p}  . \end{equation}

A  decomposition   \eqref{decomposition}
also exists for   a branch of formal separatrix $B$
with formal equation $f  \in {\mathbb C}[[x,y]]$,  yielding $g$, $k$ and $\eta$ as formal objects. In this context, we can extend the definition of the Camacho-Sad index to $B$ by taking $\gamma(T)$, a
Puiseux parametrization for $B$ such that $\gamma(0) = p$, and setting
 \[ \cs_{p}(\F,B) = \res_{t=0} \gamma^{*}  \left( \frac{1}{k}\eta \right). \]
Clearly, when $B$ is convergent, this coincides with
\eqref{camachosaddefinition}.
Finally, the $\cs$-index
   may be defined for a  reducible curve of separatrices   containing some purely formal branches by applying the adjunction formula \eqref{cs-adjunction}.

The following result is known as the Camacho-Sad index Theorem \cite{camacho1982}:
if $C \subset M$ is a compact curve invariant by a foliation $\F$   on a complex surface $M$, then
\begin{equation}
\label{cs-theorem}
\sum_{p\in\sing(\mc{F})\cap C}\cs_{p}(\mc{F},C) = C\cdot C.
\end{equation}

\subsection{The Gomez-Mont-Seade-Verjovsky index}
The decomposition
 \eqref{decomposition} is also used to calculate
the $\gsv$-index (due to Gomez-Mont, Seade
and Verjovsky, \cite{gomezmont1991}) with respect to an $\F$-invariant curve $C$:
$$\gsv_{p}(\F,C)= \frac{1}{2\pi i}\int_{\partial{C}}\frac{g}{k} d \left(\frac{k}{g}\right).$$
The  adjunction formula now reads:
\begin{equation}
\label{gsv-adjunction}
\gsv_{p}(\F,C_{1} + C_{2}) = \gsv_{p}(\F,C_{1}) + \gsv_{p}(\F,C_{2}) - 2(C_{1} \cdot C_{2})_{p}, \end{equation}
where $C_{1}$ and $C_{2}$ are  $\F$-invariant curves without common components.

The extension of this definition to a purely formal branch of separatrix $B$ is done as   previously:
 take $\gamma(T)$ a Puiseux parametrization for $B$ such that $\gamma(0) = p$ and set
 \[ \gsv_{p}(\F,B) = \res_{t=0} \gamma^{*}  \left( \frac{g}{k} d \left(\frac{k}{g}\right) \right) =
 \ord_{t=0} \left( \frac{k}{g} \circ \gamma(t)\right). \]
Then,  use the adjunction formula \eqref{gsv-adjunction} in order to define
the $\gsv$-index for an invariant curve  $C$ containing some purely formal branches.

For the $\gsv$-index, we can also state a result of global nature \cite{brunella1997I}:
if the compact curve $C \subset M$ is   invariant by a foliation $\F$   on a complex surface $M$, then
\begin{equation}
\label{gsv-global-sum}
\sum_{p\in\sing(\mc{F})\cap C}\gsv_{p}(\mc{F},C)= c_1(N_{\mc{F}})\cdot C - C \cdot C.
\end{equation}

\subsection{The variation  index}
Each point $q$ in a small punctured neighborhood of $ p \in \mathbb{C}^2$ is regular for $\F$. Then
 there exists a germ of holomorphic $1-$form $\zeta$ at $q$ such that $d\omega=\zeta\wedge \omega$. If $\zeta'$ is another such $1-$form, we have that $\zeta$ and $\zeta'$ coincide  over every leaf of $\F$. Therefore, in this punctured neighborhood, we can define a multi-valued $1-$form, still denoted by $\zeta$, with single-valued   restriction to each leaf of $\F$,  satisfying the equation
  $$d\omega=\zeta\wedge\omega.$$
   The \textit{variation index} \cite{khanedani1997} for an $\F$-invariant analytic curve $C$ is defined as
$$\var_{p}(\mc{F},C)=\frac{1}{2\pi i}\int_{\partial{C}}\zeta.$$
This index is additive in the separatrices of $\F$:
\begin{equation}
\label{var-adjunction}
\var_{p}(\F,C_{1} + C_{2}) = \var_{p}(\F,C_{1}) + \var_{p}(\F,C_{2})
\end{equation}
whenever $C_{1}$ and $C_{2}$ are    $\F$-invariant curves without common components.
Thus, for a divisor of separatrices $\B = \sum_{B} a_{B} \cdot B$   we can define
\[
 \var_{p}(\F,\B) = \sum_{B} a_{B}\var_{p}(\F,B).\]

For an  analytic invariant curve $C$, we have the relation
\begin{equation}
\label{index-sum}
\var_{p}(\F,C) = \cs_{p}(\F,C)  + \gsv_{p}(\F,C) .
\end{equation}
Now, when it comes to   defining $\var_{p}(\mc{F},B)$ for    a formal branch of separatrix $B$,  the strategy followed for the $\cs$ and the $\gsv$ indices
  is unsuitable, since the $1-$form $\zeta$ does not define a formal object
at $p \in \co^{2}$. However, knowing   $\cs_{p}(\F,B)$ and $\gsv_{p}(\F,B)$ for a formal separatrix $B$, we can adopt
formula \eqref{index-sum} as a definition for $\var_{p}(\mc{F},B)$ and use
\eqref{var-adjunction} in order to compute
 $\var_{p}(\F,C)$ for a multi-branched invariant curve  $C$.

The variation index satisfies a property of global nature expressed in the following terms:
if $\F$ is a foliation on a complex surface $M$ and $C \subset M$ is a compact invariant curve, then
\begin{equation}
\label{suwa}
\sum_{p\in\sing{\mc{F}}\cap C} \var_{p}(\mc{F},C)=c_1(N_{\mc{F}})\cdot C.
\end{equation}

\subsection{The polar excess index}
Let $\eta$ be a formal meromorphic $1-$form with trivial divisor of zeros, written  in coordinates $(x,y)$ as
$$\eta=Pdx+Qdy,$$
where $P,Q$ are formal meromorphic functions.  For $(a:b)\in\mathbb{P}^1$, the polar curve of $\eta$ with respect to $(a:b)$ is the formal curve $\mathcal{P}^{\eta}_{(a,b)}$ associated to the equation $aP+bQ=0$. Let $B$ be an irreducible curve,   not contained in the pole divisor
$(\eta)_{\infty}$, having $\gamma(t)$ as a Puiseux parametrization. We say that $B$   is \textit{invariant} by $\eta$ if $\gamma^{*}\eta \equiv 0$. In this case, we define
the \textit{polar intersection number} of $\eta$ and $B$ at $p$  (see \cite{cano2015,genzmer2016}) as the generic value of
\[ (\mathcal{P}^{\eta},B)_p=(\mathcal{P}^{\eta}_{(a,b)},B)_p = \ord_{t=0}((aP+bQ)\circ\gamma)\]
  for   $(a:b)\in\mathbb{P}^1$.  This is an ingredient for the following definition:
\begin{definition}\label{polar_index}
Let $\F$ be a germ of singular foliation at   $(\C,p)$. Let $B$ be a branch of separatrix and $\hat{F}$ be   a reduced balanced equation of separatrices adapted to $B$.  The \textit{polar excess index} \cite{cano2015,genzmer2016} of $\F$ with respect to $B$ is the integer
\[ \Delta_p(\F,B)= (\mathcal{P}^{\F},B)_p-(\mathcal{P}^{d\hat{F}},B)_p. \]
 For a curve of separatrices $C$, with irreducible factors as $B_{1}, \ldots, B_{r}$, we define the polar excess index in an additive way:
\[ \Delta_p(\F,C)=\sum^{r}_{i=1}\Delta_p(\F,B_i)=\sum^{r}_{i=1}
\left((\mathcal{P}^{\F},B_i)_p-(\mathcal{P}^{d\hat{F}},B_i)_p\right). \]
\end{definition}
This definition is independent of the balanced equation, so, in order to compute the polar excess for a multi-branched
curve,    a balanced equation simultaneously adapted to all its branches can be employed.
The additive  character of the $\Delta$-index
  enables us to extend its definition to an arbitrary divisor $\B = \sum_{B} a_{B} \cdot B$ in $\divv_{p}(\F)$:
\[
 \Delta_{p}(\F,\B) = \sum_{B} a_{B}\Delta_{p}(\F,B).\]

We can also formulate the $\Delta$-index   as the residue of the logarithmic derivative
of the ratio of equations of polar curves for $\F$ and for $d \hat{F}$, where
$\hat{F}$ is an irreducible balanced equation of separatrices adapted to the invariant curve.
More precisely,
if
$\omega = P dx + Q dy$ induces $\F$,
we define,    for $(a:b) \in \pe^{1}$, the formal meromorphic $1-$form
\[ \eta_{(a:b)} = \frac{a \hat{F}_{x} + b \hat{F}_{y}}{a P + b Q}
d \left(\frac{a P + b Q}{a \hat{F}_{x} + b \hat{F}_{y}}\right) = \frac{d(a P + b Q)}{a P + b Q}
-  \frac{d(a \hat{F}_{x} + b \hat{F}_{y})}{a \hat{F}_{x} + b \hat{F}_{y}}.
\]
Then, for generic $(a:b)$,
\[  \Delta_p(\F,B) = \res_{t=0} \gamma^{*} \eta_{(a:b)}. \]
Moreover, if $C$ is an $\F$-invariant analytic curve, then, still for generic $(a:b)$,
$$\Delta_p(\F,C)=\frac{1}{2\pi i}\int_{\partial{C}}\eta_{(a:b)}.$$

The following simple calculations    are done in   \cite{genzmer2016}
for an $\F$-invariant branch $B$:
\begin{itemize}
\item If $\F$ is a non-singular    then $\Delta_p(\F,B)=0.$
\item If $\F$ has a  non-degenerated reduced singularity, then
$$\Delta_p(\F,B)=0.$$
\item If $\F$ has a saddle-node singularity   with weak index $k+1$ we have two possi\-bi\-li\-ties:  either  $\Delta_p(\F,B)=0$, when $B$ is the strong separatrix, or $\Delta_p(\F,B)=k>0$, when $B$ is the weak separatrix.
\end{itemize}
In general, taking into account the behavior of the  $\Delta$-index under blow-ups (equation \eqref{beta_10} below), we have
\begin{equation}
\label{delta-formula}
\Delta_p(\F,B) =
  \begin{cases} \displaystyle \sum_{q \in \mathcal{I}_{p}(\F)} \nu_{q}(B) \tau_{q}(\F) &\text{if $B$ is a strong or dicritical separatrix} \medskip\\
  \displaystyle  k + \sum_{q \in \mathcal{I}_{p}(\F)} \nu_{q}(B) \tau_{q}(\F) &\text{if $B$ is a weak separatrix,}
\end{cases}
\end{equation}
where $k+1$ is the weak index associated to $B$. Thus,  $\Delta_p(\F,B) \geq 0$ and $\Delta_p(\F,B) = 0$ if and only if $\F$ is a second class foliation and $B$ is a strong or dicritical separatrix.
The polar excess index is a  measure of the existence of saddle-nodes singularities in the desingularization of $\F$. This interpretation derives from the following  result of \cite{genzmer2016}, which
is a consequence of formula \eqref{delta-formula}:

\begin{theorem}
\label{polarexcess-zero}
If $\F$ is a germ of singular foliation at $(\C,p)$ and $C$ is a curve of separatrices, then
$\Delta_p(\F,C) \geq 0$. Moreover, if
  $\B$ is a balanced divisor of  separatrices of $\F$, then $\F$ is generalized curve foliation if and only if $$\Delta_p(\F,\B_0)=0.$$
\end{theorem}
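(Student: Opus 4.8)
The plan is to derive everything from formula \eqref{delta-formula} and its consequence recorded just above the statement: for a single branch $B$ of separatrix one has $\Delta_p(\F,B)\geq 0$, with equality exactly when $\F$ is of second type and $B$ is a strong or a dicritical separatrix.

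The inequality $\Delta_p(\F,C)\geq 0$ is then immediate. Writing $C=B_1+\cdots+B_r$ as a sum of branches and using the additivity of the $\Delta$-index from Definition \ref{polar_index}, we get $\Delta_p(\F,C)=\sum_{i=1}^{r}\Delta_p(\F,B_i)\geq 0$. For the equivalence I would write $\B_0=\sum_B c_B\cdot B$ with all $c_B\geq 1$ on its support, and record two structural facts about a balanced divisor $\B$: first, every isolated separatrix of $\F$ occurs in $\B$ with coefficient $1$, so $\iso_p(\F)\subseteq\mathrm{supp}(\B_0)$ and $\B_0$ is a non-zero effective divisor; second, the isolated separatrices of $\F$ split into \emph{strong} and \emph{weak} ones, the weak separatrices being precisely those produced by the non-tangent saddle-nodes of the reduction, whereas the tangent saddle-nodes are exactly what obstructs $\F$ from being of second type.

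With these in hand the forward implication is short: if $\F$ is a generalized curve foliation, its reduction of singularities carries no saddle-node at all, hence no tangent one --- so $\F$ is of second type --- and no non-tangent one --- so $\F$ has no weak separatrix; consequently every branch in the support of $\B_0$ is strong or dicritical, the equality case of \eqref{delta-formula} gives $\Delta_p(\F,B)=0$ for each of them, and therefore $\Delta_p(\F,\B_0)=\sum_B c_B\,\Delta_p(\F,B)=0$. For the converse I would start from $\Delta_p(\F,\B_0)=0$; since this is a sum of non-negative terms with positive coefficients, $\Delta_p(\F,B)=0$ for every $B$ in the support of $\B_0$, and the equality case then forces simultaneously that $\F$ is of second type and that each such $B$ --- in particular, by the first structural fact, each isolated separatrix of $\F$ --- is strong or dicritical. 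An isolated separatrix is never dicritical, so $\F$ has no weak separatrix, hence no non-tangent saddle-node; combined with $\F$ being of second type this leaves no saddle-node whatsoever in the reduction, i.e. $\F$ is a generalized curve foliation.

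The one point that is not purely formal is the dictionary ``$\F$ has a weak separatrix $\iff$ a non-tangent saddle-node appears in the reduction of $\F$'', which I would justify by inspecting, at each reduced saddle-node met during the reduction, which of its two separatrices is contained in the exceptional divisor; the auxiliary claim that $\B_0\neq 0$ is clear when $\iso_p(\F)\neq\emptyset$ and in the remaining (purely dicritical) case is part of the basic theory of balanced divisors, via the balancing identity $\sum_{B\in\sep_p(D)}a_B=2-\val(D)$.
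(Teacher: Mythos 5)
Your proof is correct and follows exactly the route the paper indicates: the paper presents Theorem \ref{polarexcess-zero} as ``a consequence of formula \eqref{delta-formula}'', and your argument --- branch-wise non-negativity plus additivity for the first claim, and for the equivalence the facts that $\B_0$ contains every isolated separatrix with coefficient $1$ and that weak separatrices correspond precisely to non-tangent saddle-nodes --- is the same derivation, also mirroring the paper's own proof of the closely related Proposition \ref{total-delta-zero}. No gaps; your side remarks (that $\B_0\neq 0$ in the purely dicritical case via the balancing identity, and the weak-separatrix/non-tangent-saddle-node dictionary) are both sound.
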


The polar excess and the
 $GSV$-index are interrelated by the following result \cite{genzmer2016}:
\begin{theorem}
\label{polarexcess-gsv}
Let $\F$ be a germ of singular foliation at $(\C,p)$. Let $C$ be a curve of separatrices and $\B$ be a balanced divisor adapted to $C$. Then
$$\gsv_p(\F,C)=\Delta_p(\F,C)+(C,\B_0 - C)_p-(C,\B_{\infty})_p.$$
In particular, when
$\F$ is non-dicritical   and $C$ is the complete set of separatrices, then
$$\gsv_p(\F,C)=\Delta_p(\F,C).$$
\end{theorem}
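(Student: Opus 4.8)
The plan is to rewrite both indices as differences of polar intersection numbers along $C$, and then to reduce the identity to an elementary computation comparing the polar curves of $d\hat F$ and of a reduced equation of $C$.

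First I would fix a reduced equation $f\in\co[[x,y]]$ of $C$ and use the decomposition \eqref{decomposition}, $g\,\omega=k\,df+f\,\eta$ (valid also in the formal category), with $k$ and $f$ coprime, to prove
\[
\gsv_{p}(\F,C)=(\mathcal{P}^{\F},C)_{p}-(\mathcal{P}^{df},C)_{p},
\]
where $\mathcal{P}^{df}$ is the polar curve of $df$. Indeed, $\tfrac{g}{k}\,d(k/g)=d\log k-d\log g$, so the definition of the $\gsv$-index gives $\gsv_{p}(\F,C)=((k)_{0},C)_{p}-((g)_{0},C)_{p}$, the difference of the intersection numbers at $p$ of $C$ with the zero divisors of $k$ and $g$. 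On the other hand, writing $\omega=P\,dx+Q\,dy$ and $\eta=\eta_{1}\,dx+\eta_{2}\,dy$, contracting $g\,\omega=k\,df+f\,\eta$ against $a\,\partial_{x}+b\,\partial_{y}$ yields $g\,(aP+bQ)=k\,(af_{x}+bf_{y})+f\,(a\eta_{1}+b\eta_{2})$; restricting to each branch of $C$, where $f$ vanishes identically, and taking orders in the Puiseux parameter for generic $(a:b)$ produces $((g)_{0},C)_{p}+(\mathcal{P}^{\F},C)_{p}=((k)_{0},C)_{p}+(\mathcal{P}^{df},C)_{p}$, which is the claimed identity. Combining it with Definition \ref{polar_index}, which reads $\Delta_{p}(\F,C)=(\mathcal{P}^{\F},C)_{p}-(\mathcal{P}^{d\hat F},C)_{p}$ for a reduced balanced equation $\hat F$ adapted to $C$, I get
\[
\gsv_{p}(\F,C)-\Delta_{p}(\F,C)=(\mathcal{P}^{d\hat F},C)_{p}-(\mathcal{P}^{df},C)_{p}.
\]

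Next I would compute this right-hand side. Write $\hat F=\hat F_{0}/\hat F_{\infty}$ with $\hat F_{0},\hat F_{\infty}$ coprime reduced equations of $\B_{0}$ and $\B_{\infty}$; since $\B$ is adapted to $C$ we may factor $\hat F_{0}=f\cdot e$ with $e$ a reduced equation of the effective divisor $\B_{0}-C$. From $a\hat F_{x}+b\hat F_{y}=\hat F_{\infty}^{-2}\bigl(\hat F_{\infty}(a\hat F_{0,x}+b\hat F_{0,y})-\hat F_{0}(a\hat F_{\infty,x}+b\hat F_{\infty,y})\bigr)$, restricting to a branch $B$ of $C$ (where $f$, hence $\hat F_{0}=fe$, vanishes) and taking orders kills the term carrying $\hat F_{0}$ and leaves $(\mathcal{P}^{d\hat F},B)_{p}=(\mathcal{P}^{d\hat F_{0}},B)_{p}-(\B_{\infty},B)_{p}$, the negative term being the net effect of $\hat F_{\infty}^{-2}$. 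Likewise $a\hat F_{0,x}+b\hat F_{0,y}=e\,(af_{x}+bf_{y})+f\,(ae_{x}+be_{y})$ restricts on $B$ to $e\,(af_{x}+bf_{y})$, so $(\mathcal{P}^{d\hat F_{0}},B)_{p}=(\B_{0}-C,B)_{p}+(\mathcal{P}^{df},B)_{p}$. Summing over the branches of $C$ gives
\[
(\mathcal{P}^{d\hat F},C)_{p}-(\mathcal{P}^{df},C)_{p}=(C,\B_{0}-C)_{p}-(C,\B_{\infty})_{p},
\]
and substituting into the previous display produces the formula. The last assertion is then immediate: when $\F$ is non-dicritical and $C$ is the full separatrix set, the balanced divisor is $\B=C$ itself, so $\B_{0}=C$, $\B_{\infty}=0$, and both correction terms vanish.

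I expect the steps to be mostly bookkeeping, with the point requiring genuine attention being the passage to a \emph{generic} direction $(a:b)$: one must know that no branch of $C$ is contained in any of the polar curves $\{aP+bQ=0\}$, $\{af_{x}+bf_{y}=0\}$, $\{a\hat F_{x}+b\hat F_{y}=0\}$, so that the pullbacks to the Puiseux parametrizations are not identically zero and their orders are indeed the polar intersection numbers --- this is built into the definition of the polar intersection number and rests on $C$ being invariant by $\F$ and by $d\hat F$. The only other care needed is sign control: the residues defining $\gsv_{p}$ and $\Delta_{p}$ carry fixed orientations on $\partial C$, and the pole divisor $\hat F_{\infty}^{-2}$ must be tracked with the correct sign, as above. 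If $\B$ happens not to be reduced, one runs the same computation with the integer coefficients of $\B_{0}$ and $\B_{\infty}$ in place of the reduced equations $e$ and $\hat F_{\infty}$, and nothing changes.
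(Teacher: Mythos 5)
Your argument is correct; note that the paper itself does not prove this statement but imports it from \cite{genzmer2016}, and your derivation --- rewriting $\gsv_p(\F,C)$ as the difference of polar intersection numbers $(\mathcal{P}^{\F},C)_p-(\mathcal{P}^{df},C)_p$ via the decomposition \eqref{decomposition} and then comparing the polar curves of $d\hat F$ and $df$ branch by branch --- is essentially the standard proof given in that reference. The only point worth making explicit is that $g$ in \eqref{decomposition} can (and must) be chosen coprime to $f$, so that all the orders you add and subtract along each branch are finite.
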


\subsection{The total index}

Let $\I_{p}(\F,C)$ denote one of the four residue-type indices relative to
a curve of separatrices $C$ defined so far --- $\cs$, $\gsv$, $\var$ or $\Delta$.
When $\F$ is non-dicritical and $C$ is the complete set of separatrices, it is usual to say that $\I_{p}(\F,C)$ is \textit{total}. When it comes to dicritical singularities, an attempt to establish
a definition of total index involves the choice
of  a finite subset of $\sep_{p}(\F)$ as a reference. We propose  to use  balanced
divisors of separatrices for this goal:
\begin{definition}
\label{def-total-index}
Let $\F$ be a foliation at $(\C,p)$ and $\B$ be a primitive balanced divisor
of separatrices.  The \textit{total} index of $\F$ at $p$ is
defined as
$$\I_{p}(\F,\B_{0}) - \I_{p}(\F,\B_{\infty})$$
and denoted by
$$\I_{p}(\F) = \cs_{p}(\F),\ \gsv_{p}(\F),\ \var_{p}(\F)\ \text{ or }\ \Delta_{p}(\F).$$
\end{definition}
Observe that $\I_{p}(\F,B)$ is the same for all branches $B \in \sep_{p}(D)$
associated to the same dicritical component $D \subset \D$. This results from
formula \eqref{delta-formula} for the $\Delta$-index and, for the three other indices,
 from similar formulas
 based on their behavior under blow-ups (see \cite{brunella1997II}).
 As a consequence, $\I_{p}(\F)$ does not depend on the choice of the primitive balanced divisor.
We inherit a connecting relation similar to \eqref{index-sum}:
\[ \var_{p}(\F) = \cs_{p}(\F)+ \gsv_{p}(\F) .\]

The total $\var$ and the total  $\Delta$ indices  may be calculated using any balanced divisor
of separatrices $\B$, not necessarily a reduced one:
\[ \var_{p}(\F) = \var_{p}(\F,\B) \quad \text{and} \quad \Delta_{p}(\F) = \Delta_{p}(\F,\B).\]

Next we  state   a slightly modified version of Theorem \ref{polarexcess-zero} involving the total $\Delta$.
We remark that, when the desingularization divisor  $\D$ of $\F$ is devoid of dicritical components of valence two or higher, there are no
 poles in a primitive
balance divisor of separatrices and
the  statement below is precisely that of Theorem \ref{polarexcess-zero}.

\begin{proposition}
\label{total-delta-zero}
$\F$ is a generalized curve foliation at $(\C,p)$ if and only if $\Delta_{p}(\F) = 0$.
\end{proposition}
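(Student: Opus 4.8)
The implication ``$\F$ a generalized curve $\Rightarrow\ \Delta_{p}(\F)=0$'' is the easy one. A generalized curve foliation has no saddle-nodes in its reduction, so $\tau_{q}(\F)=0$ for every $q\in\mathcal{I}_{p}(\F)$ and it has no weak separatrices; by the computations recalled before Theorem \ref{polarexcess-zero} (equivalently, by \eqref{delta-formula}) this gives $\Delta_{p}(\F,B)=0$ for \emph{every} branch of separatrix $B$, whence $\Delta_{p}(\F)=\Delta_{p}(\F,\B_{0})-\Delta_{p}(\F,\B_{\infty})=0$ for any primitive balanced divisor $\B=\B_{0}-\B_{\infty}$. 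For the converse the plan is to write down a closed formula for $\Delta_{p}(\F)$ that is manifestly non-negative and characterizes generalized curves by its vanishing.

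To obtain it, fix any balanced divisor $\B=\sum_{B}a_{B}\cdot B$ of separatrices; since the total $\Delta$-index can be computed from an arbitrary balanced divisor one has $\Delta_{p}(\F)=\Delta_{p}(\F,\B)=\sum_{B}a_{B}\,\Delta_{p}(\F,B)$. Plugging in \eqref{delta-formula} branch by branch --- and using that weak separatrices are always isolated, so they occur with coefficient $a_{B}=1$ --- the weak-index terms and the ``$\sum_{q}\nu_{q}(B)\tau_{q}(\F)$'' terms separate, and the sum over branches reorganizes (with $\sum_{B}a_{B}\nu_{q}(B)=\nu_{q}(\B)$) into
\[
\Delta_{p}(\F)\;=\;\sum_{B}k_{B}\;+\;\sum_{q\in\mathcal{I}_{p}(\F)}\nu_{q}(\B)\,\tau_{q}(\F),
\]
where the first sum runs over the weak separatrices $B$ (with $k_{B}+1$ the weak index of $B$) and $\nu_{q}(\B)$ is the algebraic multiplicity at $q$ of the strict transform of the divisor $\B$.

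The step that needs care --- and the main obstacle --- is the inequality $\nu_{q}(\B)\ge 1$ for every $q\in\mathcal{I}_{p}(\F)$. I would get it by showing that the strict transform of a balanced divisor of $\F$ at an infinitely near point $q$ is, up to the exceptional components of $\D$ through $q$, again a balanced divisor of the germ of $\F$ at $q$ (the valences of the relevant dicritical components being unchanged when one passes to the reduction at $q$), so that $\nu_{q}(\B)$ agrees, up to a controlled correction, with the algebraic multiplicity of a balanced divisor, which is positive --- a fact that follows from Proposition \ref{prop:Equa-Ba} together with the analysis of balanced divisors in \cite{genzmer2007}. Once $\nu_{q}(\B)\ge 1$ is available, every term of the displayed formula is non-negative, so $\Delta_{p}(\F)\ge 0$; and $\Delta_{p}(\F)=0$ forces simultaneously that $\F$ has no weak separatrix and that $\tau_{q}(\F)=0$ for all $q$, i.e.\ that the reduction of $\F$ contains no tangent saddle-node. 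Since a non-tangent saddle-node always contributes a weak separatrix, $\F$ then has no saddle-node at all in its reduction, i.e.\ $\F$ is a generalized curve foliation. This proves the equivalence. (When $\D$ carries no dicritical component of valence $\ge 2$ one has $\B_{\infty}=0$, $\Delta_{p}(\F)=\Delta_{p}(\F,\B_{0})$, and the statement reduces verbatim to Theorem \ref{polarexcess-zero}, as pointed out in the remark above.)
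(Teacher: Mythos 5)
Your forward implication is correct and is exactly the paper's argument. The converse, however, has a genuine gap precisely at the step you flag as ``the main obstacle''. The reorganized formula
\[
\Delta_{p}(\F)\;=\;\sum_{B\ \mathrm{weak}}k_{B}\;+\;\sum_{q\in\cl{I}_{p}(\F)}\nu_{q}(\B)\,\tau_{q}(\F)
\]
is a correct consequence of \eqref{delta-formula}, but your whole argument rests on the inequality $\nu_{q}(\B)\geq 1$ at every $q$ with $\tau_{q}(\F)>0$, and this is not proved. The justification you sketch does not close it, for two reasons. First, the strict transform of $\B$ at an infinitely near point $q$ differs from a balanced divisor of the germ at $q$ by the invariant exceptional components of $\D$ passing through $q$, of which there can be two (at a corner); so even granting that a balanced divisor has multiplicity $\geq 1$, you only obtain $\nu_{q}(\B)\geq -1$, which is useless for termwise non-negativity. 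Second, the positivity of the multiplicity of a balanced divisor is itself, via Proposition \ref{prop:Equa-Ba}, equivalent to the inequality $\tau_{q}(\F)\leq\nu_{q}(\F)$; Proposition \ref{prop:Equa-Ba} is an identity, not an estimate, so it cannot deliver this, and the inequality is a genuine theorem whose proof requires counting isolated separatrices against the poles created by high-valence dicritical components --- i.e., exactly the work your reduction was supposed to avoid. In short, you have traded the original statement for an unproved inequality of essentially the same depth, and as stated (for \emph{every} $q\in\cl{I}_{p}(\F)$) the inequality is moreover doubtful at corner points, so it would at least have to be restricted to the points where $\tau_{q}>0$ and established there.

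For contrast, the paper sidesteps termwise positivity entirely: invoking \cite[Prop.~4]{mol2002} (every invariant component of $\D$ meets an isolated separatrix), it assigns injectively to each dicritical separatrix $B$ in the pole divisor $\B_{\infty}$ an isolated separatrix $\tilde{B}$ with $\Delta_{p}(\F,B)\leq\Delta_{p}(\F,\tilde{B})$, and then writes $0=\Delta_{p}(\F)$ as a sum of two manifestly non-negative quantities, $\Delta_{p}(\F,\hat{\B}_{0})$ and $\Delta_{p}(\F,\tilde{\B}_{0}-\B_{\infty})$, whose vanishing is then analyzed branch by branch through \eqref{delta-formula}. If you want to salvage your route, the missing ingredient is a proof that $\nu_{q}(\B)\geq 1$ wherever $\tau_{q}(\F)>0$, and that proof will need the same separatrix-counting input from \cite{mol2002}.
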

\begin{proof}
A generalized curve foliation is in particular second class and all its separatrices  are either strong or dicritical. Thus,  formula \eqref{delta-formula} gives
$\Delta_{p}(\F,B) = 0$ for every $B \in \sep_{p}(\F)$,  which on its turn implies that $\Delta_{p}(\F,\B)=0$ for any divisor of separatrices $\B$ and, in particular, for a balanced divisor.

The converse proof is based on
  the following fact: if $\D$ is the desingularization divisor of
$\F$, then there is at least one isolated separatrix crossing each   component of the
$\tilde{\F}$-invariant part of $\D$   \cite[Prop. 4]{mol2002}.
Thus, the number of isolated separatrices
is at least $1  + \sum_{D}(\val(D) - 1)$, where the sum is over all dicritical components $D \subset \D$.
Let $\B$ be a primitive balanced divisor and
$D \subset \D$ be a dicritical component of  $\val(D) > 2$ .
The pole divisor $\B_{\infty}$ contains $\val(D) - 2$ separatrices of $\sep_{p}(D)$.
Note that $D$ appears in the desingularization process as a component of valence 0, 1, or 2 (when it results, respectively, from
the blow-up at $p$ itself, at a non corner singularity or at a corner singularity). So  at least
$\val(D) - 2$ points of $D$ will be blown-up in the subsequent steps of the reduction process and to each one
of them we can associate an isolated separatrix.
Therefore, to each dicritical separatrix $B$ appearing in $\B_{\infty}$, we can associate in an injective way
 one such isolated separatrix $\tilde{B}$. It follows from \eqref{delta-formula}
that
\begin{equation}
\label{delta-inequality}
   \Delta_{p}(\F,B) \leq \Delta_{p}(\F,\tilde{B}).
\end{equation}
Denote by $\tilde{\B}_0$ the divisor obtained
by summing up these $\tilde{B}$. We have $\Delta_{p}(\F, \tilde{\B}_0 - \B_{\infty}) \geq 0$
by \eqref{delta-inequality}.
Now we decompose $\B_0 =  \hat{\B}_0 + \tilde{\B}_0$ as a sum of effective divisor, where $\hat{\B}_0$ is non-trivial.
Then
\[0 = \Delta_{p}(\F,\B) = \Delta_{p}(\F,\B_{0})- \Delta_{p}(\F,\B_{\infty})
= \Delta_{p}(\F,\hat{\B}_0) + \Delta_{p}(\F, \tilde{\B}_0 - \B_{\infty}).\]
The terms at the right are non negative and thus both   are zero. This implies, in particular,
that  $\Delta_{p}(\F,B)= 0$ for every separatrix $B$ in $\hat{\B}_0$.
Formula   \eqref{delta-formula} then gives at once 
that $\F$ is a second type foliation and that every  isolated
$B$ in $\hat{\B}_0$ is a weak separatrix.
For the  separatrices in $\tilde{\B}_{0}$, remark that each inequality \eqref{delta-inequality} is actually an
equality,
and this is possible only if $\tilde{B}$ is a strong separatrix.
Summarizing, $\F$ is a second class foliation having only strong isolated separatrices.
It is therefore a generalized curve foliation.
\end{proof}

\section{Second variation index}
\label{second-variation-section}
In order to condense notation and terminology, we assemble the variation  and the polar excess indices in  a new invariant:
\begin{definition}
Let $\F$ be a germ of singular foliation at $(\C,p)$ and $C$ be a curve of separatrices.  The \textit{second variation index} of $\F$ along $C$ is
defined as
\begin{eqnarray*}
\zeta_p(\F,C)= \var_p(\F,C)+ \Delta_p(\F,C).
\end{eqnarray*}
\end{definition}

The variation and the polar excess are additive in the separatrices and this property is inherited by the $\zeta$-index. We can therefore define it for a divisor of separatrices and   have    a \textit{total second variation index}   by means of a balanced  divisor $\B$:
\[
\zeta_{p}(\F)= \zeta_{p}(\F,\B) =   \var_{p}(\F)+\Delta_{p}(\F).\]

\par Next, we describe the behavior of the second variation under a blow-up $\sigma:(\tilde{\mathbb{C}}^2,D)\to (\C,p)$. As usual,  we denote
respectively by $\sigma^{*}\F = \tilde{\F}$ and $\sigma^{*}B = \tilde{B}$ the transforms of the foliation $\F$ and of a   branch of separatrix $B \in \sep_{p}(\F)$. A divisor of separatrices $\B = \sum_{B} a_{B}\cdot B$ is said to be of \textit{order} $q \in D$ if $\tilde{B}  \cap D = q$ whenever $a_{B} \neq 0$. If this is so, the transform of $\B$ is defined as     $\tilde{\B}= \sum_{B}a_{B} \cdot \tilde{B}$, which
is a divisor of separatrices for $\tilde{\F}$ at $q \in D$.

\begin{lemma}\label{variation_beta}
With the notation above, if $q = \tilde{B} \cap D$, then
$$\zeta_q(\tilde{\F},\tilde{B})=\zeta_p(\F,B)-(m_p(\F)+\tau_p(\F))\nu_p(B),$$
  where  $\tau_p(\F)$ is the   tangency excess of $\F$  at $p$ and
$$m_p(\F)=\begin{cases}
\nu_p(\F),  &\text{if $\sigma$ is non-dicritical}\\
\nu_p(\F)+1, &\text{if $\sigma$ is dicritical}.
\end{cases}$$
Moreover,
if  $\B$  is a divisor of separatrices of order $q \in D$, then
$$\zeta_q(\tilde{\F},\tilde{\B})=\zeta_p(\F,\B)-(m_p(\F)+\tau_p(\F))\nu_p(\B).$$
 \end{lemma}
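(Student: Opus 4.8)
The plan is to compute the change of each of the two summands $\var_p$ and $\Delta_p$ under a single blow-up and add the results. Since $\zeta_p(\F,C) = \var_p(\F,C) + \Delta_p(\F,C)$ and both indices are additive in the separatrix set, it suffices to treat a single branch $B$; the divisor statement then follows by $\ze$-linearity, exactly as in the passage from the branch to the divisor versions of the adjunction formulas in Section \ref{section-indices}. For the $\Delta$-index the blow-up formula is already essentially recorded in the excerpt: equation \eqref{beta_10} (referenced just before \eqref{delta-formula}) governs the behavior of $\Delta_p(\F,B)$ under $\sigma$, and formula \eqref{delta-formula} itself expresses $\Delta_p(\F,B)$ as a sum $\sum_{q\in\mathcal{I}_p(\F)}\nu_q(B)\tau_q(\F)$ (plus possibly $k$ if $B$ is weak, but the weak correction is intrinsic to $B$ and unaffected by $\sigma$). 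Comparing this sum at $p$ with the corresponding sum at $q=\tilde B\cap D$ removes exactly the term indexed by $p$, giving a contribution $-\tau_p(\F)\,\nu_p(B)$ — note $\nu_p(B)=\nu_q(\tilde B)$ only needs $\nu_p(B)$ as the multiplicity at the blown-up point, which is what appears. So $\Delta$ accounts for the $\tau_p(\F)\nu_p(B)$ part of the claimed formula.

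The remaining work is the variation index. First I would reduce to the analytic case: if $B$ is purely formal, $\var_p(\F,B)$ is defined via \eqref{index-sum} as $\cs_p(\F,B)+\gsv_p(\F,B)$, and both $\cs$ and $\gsv$ have known blow-up transformation laws (the ``similar formulas based on their behavior under blow-ups'' cited from \cite{brunella1997II} right after Definition \ref{def-total-index}), so one can either quote those directly for $\var$ or work branch-wise through $\cs$ and $\gsv$ and add. The content to extract is the standard fact that, writing $d\omega = \zeta\wedge\omega$ with the multivalued $1$-form $\zeta$, after a blow-up $\sigma$ one has $\sigma^*\zeta = \tilde\zeta - (\text{order of vanishing})\,\tfrac{d(\text{exceptional eq.})}{(\text{exceptional eq.})} + (\text{exact/holomorphic corrections})$; integrating over $\partial\tilde B$ and using that $\tilde B$ meets the exceptional divisor transversally at a non-corner point produces the residue contribution $-m_p(\F)\nu_p(B)$, where $m_p(\F)=\nu_p(\F)$ in the non-dicritical case and $\nu_p(\F)+1$ in the dicritical case. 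The distinction between the two cases is precisely the order of vanishing of $\pi^*\omega$ along the exceptional divisor before removing the one-dimensional component: for a dicritical blow-up the divisor is not invariant and one extra unit is absorbed. I would present this computation at the level of the local form $\omega = P\,dx + Q\,dy$ in a chart $(x,y)\mapsto(x,xy)$ adapted to $q$, extracting only the residue of $\gamma^*\sigma^*\zeta$ along a Puiseux parametrization $\gamma$ of $\tilde B$.

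Adding the two pieces gives
$$\zeta_q(\tilde\F,\tilde B) = \var_q(\tilde\F,\tilde B) + \Delta_q(\tilde\F,\tilde B) = \bigl(\var_p(\F,B) - m_p(\F)\nu_p(B)\bigr) + \bigl(\Delta_p(\F,B) - \tau_p(\F)\nu_p(B)\bigr),$$
which is exactly $\zeta_p(\F,B) - (m_p(\F)+\tau_p(\F))\nu_p(B)$, as claimed. Finally, for a divisor $\B=\sum_B a_B\cdot B$ of order $q$, summing the branch formula against the coefficients $a_B$ and using $\nu_p(\B)=\sum_B a_B\nu_p(B)$ yields the divisor version. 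I expect the main obstacle to be the careful bookkeeping of the variation index under blow-up in the dicritical case: one must track the order of vanishing of $\pi^*\omega$ along $D$ correctly (it is $\nu_p(\F)+1$ before stripping the non-isolated component, giving the $+1$ in $m_p(\F)$), and be sure that the multivaluedness of $\zeta$ does not introduce spurious periods — this is handled by the fact that $\zeta$ is single-valued on each leaf and $\tilde B$ is a leaf closure, so $\gamma^*\sigma^*\zeta$ is a genuine meromorphic $1$-form in $t$ whose residue at $0$ is what we integrate. The purely formal case adds no new difficulty once \eqref{index-sum} is invoked as the definition.
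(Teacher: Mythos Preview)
Your approach is correct and is essentially the same as the paper's: decompose $\zeta = \var + \Delta$, invoke the known blow-up formulas for each summand, add them, and then pass to divisors by additivity. The paper's proof is more concise in that it simply records the two blow-up formulas $\var_q(\tilde\F,\tilde B)=\var_p(\F,B)-m_p(\F)\nu_p(B)$ and $\Delta_q(\tilde\F,\tilde B)=\Delta_p(\F,B)-\tau_p(\F)\nu_p(B)$ as citations from \cite{brunella1997II} and \cite{genzmer2016} respectively (these together constitute equation \eqref{beta_10}), whereas you sketch how one would actually derive them; but the logical structure is identical.
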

\begin{proof}
The  formula for a branch $B \subset \sep_{p}(\F)$ is a consequence of known formulas for
the behavior under blow-ups for the variation \cite{brunella1997II} and the polar excess \cite{genzmer2016}  indices:
\begin{equation}
\label{beta_10}
\begin{array}{l}
 \var_q(\tilde{\F},\tilde{B}) = \var_p(\F,B)-m_p(\F)\nu_p(B), \medskip \smallskip \\
 \Delta_q(\tilde{\F},\tilde{B}) = \Delta_p(\F,B)-\tau_p(\F)\nu_p(B).
 \end{array}
 \end{equation}
The expression for a divisor is then a consequence of the additiveness of the second variation index.
\end{proof}

Now we examine   the total second variation.
We have that  $\zeta_{p}(\F) = \zeta_{p}(\F, \B)$, where $\B$ is a balanced divisor of separatrices.
Suppose that the $D$-singular point of $\tilde{\F}$ are $q_1,\ldots,q_{\ell}$. In order to calculate the total $\zeta$ at these points we need to relate the  transform of  $\B$ with balanced divisors
at the points $q_{j}$. Denote by $S(q_{j})  \subset \text{Sep}_{p}(\F)$
 the subset of all separatrices of order $q_{j} \in D$ and decompose
 $$\B = \sum_{B \in \text{Sep}_{p}(\F)} a_{B} \cdot B = \sum_{j=1}^{\ell} \sum_{B \in S(q_{j})} a_{B} \cdot B =  \sum_{j=1}^{\ell} \B_{j},$$
where $\B_{j} = \sum_{B \in S(q_{j})} a_{B} \cdot B$. As before, denote by $\tilde{\B}_{j}$    the transform of  $\B_{j}$.
 There are two situations \cite{genzmer2016}:
\begin{itemize}
\item  $\sigma$ is a non-dicritical blow-up, meaning that the exceptional divisor   is $\tilde{\F}$-invariant. Then $\tilde{\B}_{j} + D$ is  a balanced divisor for $\tilde{\F}$  at $q_{j}$, where
   we keep denoting by $D$ the germ of the exceptional divisor at $q_{j}$.
\item   $\sigma$ is a dicritical blow-up, one such that the exceptional divisor   is not $\tilde{\F}$-invariant. Then $\tilde{\B}_{j}$  is a balanced divisor  for $\tilde{\F}$  at $q_{j}$.
\end{itemize}

We can state the following result:
\begin{proposition}
\label{zeta-blowup}
Let  $\sigma:(\tilde{\mathbb{C}}^2,D)\to (\C,p)$ be a   blow-up at $p\in \C$. Suppose that $q_1,\ldots,q_{\ell}$ are the $D$-singular points of $\tilde{\F}$. Then
\[
\zeta_{p}(\F) =
\begin{cases}
\displaystyle  \sum^{\ell}_{j=1}\zeta_{q_{j}}(\tilde{\F})+\nu^{2}_p(\F)-\tau^{2}_p(\F)
 &\text{if $\sigma$ is non-dicritical} \\
\displaystyle \sum^{\ell}_{j=1}\zeta_{q_{j}}(\tilde{\F})+(\nu_p(\F)+1)^2-\tau^{2}_p(\F)
 &\text{if $\sigma$ is dicritical.}
 \end{cases}
\]
\end{proposition}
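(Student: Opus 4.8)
The plan is to run everything through a fixed balanced divisor of separatrices, push it down by $\sigma$ with the blow-up formula of Lemma \ref{variation_beta}, and account for the extra contribution of the exceptional divisor $D$ (present only in the non-dicritical case) by means of the global index theorems on $D$ together with Theorem \ref{polarexcess-gsv}.

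First I would fix a balanced divisor $\B$ of $\F$ at $p$ and use the decomposition $\B=\sum_{j=1}^{\ell}\B_{j}$ from the discussion preceding the statement, with $\B_{j}$ of order $q_{j}$. Applying the divisor form of Lemma \ref{variation_beta} to each $\B_{j}$ and summing, the additivity of $\zeta$ and of $\nu_{p}$ gives the basic identity
\[
\sum_{j=1}^{\ell}\zeta_{q_{j}}(\tilde{\F},\tilde{\B}_{j})=\zeta_{p}(\F)-\bigl(m_{p}(\F)+\tau_{p}(\F)\bigr)\,\nu_{p}(\B),
\]
while Proposition \ref{prop:Equa-Ba} yields $\nu_{p}(\B)=\nu_{p}(\F)+1-\tau_{p}(\F)$. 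In the dicritical case $\tilde{\B}_{j}$ is already balanced for $\tilde{\F}$ at $q_{j}$, so $\zeta_{q_{j}}(\tilde{\F})=\zeta_{q_{j}}(\tilde{\F},\tilde{\B}_{j})$; plugging in $m_{p}(\F)=\nu_{p}(\F)+1$ and expanding the product $(\nu_{p}(\F)+1+\tau_{p}(\F))(\nu_{p}(\F)+1-\tau_{p}(\F))=(\nu_{p}(\F)+1)^{2}-\tau^{2}_{p}(\F)$ closes this case with no further input.

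In the non-dicritical case the balanced divisor for $\tilde{\F}$ at $q_{j}$ is $\tilde{\B}_{j}+D$, and additivity of $\zeta$ in the separatrices gives $\zeta_{q_{j}}(\tilde{\F})=\zeta_{q_{j}}(\tilde{\F},\tilde{\B}_{j})+\zeta_{q_{j}}(\tilde{\F},D)$. Since $m_{p}(\F)=\nu_{p}(\F)$ here, the target identity follows once I show that $\sum_{j}\zeta_{q_{j}}(\tilde{\F},D)=\nu_{p}(\F)+\tau_{p}(\F)$, because then $\zeta_{p}(\F)=\sum_{j}\zeta_{q_{j}}(\tilde{\F})+(\nu_{p}(\F)+\tau_{p}(\F))(\nu_{p}(\B)-1)=\sum_{j}\zeta_{q_{j}}(\tilde{\F})+\nu^{2}_{p}(\F)-\tau^{2}_{p}(\F)$. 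To evaluate $\sum_{j}\zeta_{q_{j}}(\tilde{\F},D)$ I would write $\zeta_{q_{j}}(\tilde{\F},D)=\cs_{q_{j}}(\tilde{\F},D)+\gsv_{q_{j}}(\tilde{\F},D)+\Delta_{q_{j}}(\tilde{\F},D)$ and apply Theorem \ref{polarexcess-gsv} with the balanced divisor $\tilde{\B}_{j}+D$, which is adapted to $D$ because $(\tilde{\B}_{j}+D)_{0}-D=(\tilde{\B}_{j})_{0}\ge 0$; this turns $\Delta_{q_{j}}(\tilde{\F},D)$ into $\gsv_{q_{j}}(\tilde{\F},D)-(D,\tilde{\B}_{j})_{q_{j}}$, so that $\zeta_{q_{j}}(\tilde{\F},D)=\cs_{q_{j}}(\tilde{\F},D)+2\gsv_{q_{j}}(\tilde{\F},D)-(D,\tilde{\B}_{j})_{q_{j}}$. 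Summing over the $q_{j}$, which are exactly the singular points of $\tilde{\F}$ on the compact invariant rational curve $D$, I would invoke the Camacho--Sad theorem \eqref{cs-theorem} to get $\sum_{j}\cs_{q_{j}}(\tilde{\F},D)=D\cdot D=-1$; the global $\gsv$-formula \eqref{gsv-global-sum} together with the standard normal-bundle identity $c_{1}(N_{\tilde{\F}})\cdot D=\nu_{p}(\F)$ for a non-dicritical blow-up to get $\sum_{j}\gsv_{q_{j}}(\tilde{\F},D)=\nu_{p}(\F)+1$; and the elementary fact $\sum_{j}(D,\tilde{\B}_{j})_{q_{j}}=(D\cdot\tilde{\B})=\sum_{B}a_{B}\nu_{p}(B)=\nu_{p}(\B)$ for the strict transform $\tilde{\B}$ of $\B$. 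Adding these, $\sum_{j}\zeta_{q_{j}}(\tilde{\F},D)=-1+2(\nu_{p}(\F)+1)-\nu_{p}(\B)=2\nu_{p}(\F)+1-\nu_{p}(\B)$, which is $\nu_{p}(\F)+\tau_{p}(\F)$ by Proposition \ref{prop:Equa-Ba}.

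I expect the main obstacle to be precisely this last evaluation in the non-dicritical case: the contribution of the newly invariant component $D$ to the balanced divisor at each $q_{j}$ must be isolated correctly, and combining it across the $q_{j}$ forces the use of Theorem \ref{polarexcess-gsv} (to trade the $\Delta$-contribution of $D$ for a $\gsv$-contribution plus an intersection term) together with the global Camacho--Sad and GSV theorems on $D$ and the identity $c_{1}(N_{\tilde{\F}})\cdot D=\nu_{p}(\F)$. A more computational alternative is to establish $\sum_{j}\Delta_{q_{j}}(\tilde{\F},D)=\tau_{p}(\F)$ directly from formula \eqref{delta-formula}, but this needs careful bookkeeping of the tangent saddle-nodes along the reduction tree, which the argument above avoids.
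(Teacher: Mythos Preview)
Your proposal is correct and follows essentially the same route as the paper: fix a balanced divisor, split it along the $q_{j}$, apply Lemma~\ref{variation_beta}, and in the non-dicritical case isolate the contribution of $D$ via Theorem~\ref{polarexcess-gsv} together with the global index theorems on $D$ and Proposition~\ref{prop:Equa-Ba}. The only cosmetic difference is that the paper computes $\sum_{j}\zeta_{q_{j}}(\tilde{\F},D)$ by writing $\zeta=\var+\Delta$ and invoking the global variation formula~\eqref{suwa} directly for $\sum_{j}\var_{q_{j}}(\tilde{\F},D)=\nu_{p}(\F)$, whereas you split one step further as $\zeta=\cs+\gsv+\Delta$ and use~\eqref{cs-theorem} and~\eqref{gsv-global-sum} separately; since~\eqref{suwa} is precisely the sum of these two, the computations are identical.
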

\begin{proof}
\par We split the proof in two parts.
\par \noindent \underline{Part 1}: \textit{The non-dicritical case}.
The total $\zeta$ at each $q_{j}$ is
\begin{equation}\label{beta_total}
\zeta_{q_{j}}(\tilde{\F})=\zeta_{q_{j}}(\tilde{\F},\tilde{\B}_j+D)=\zeta_{q_{j}}(\tilde{\F},\tilde{\B}_j)+
\zeta_{q_{j}}(\tilde{\F},D).
\end{equation}
We first calculate
\begin{equation}\label{beta_16}
\sum^{\ell}_{j=1}\zeta_{q_{j}}(\tilde{\F},D)=\sum^{\ell}_{j=1}\var_{q_{j}}(\tilde{\F},D)+\sum^{\ell}_{j=1}\Delta_{q_{j}}(\tilde{\F},D).
\end{equation}
The sum of $\var$-indices along $D$ is given by equation \eqref{suwa}:
\begin{equation}
\label{beta_17}
\sum^{\ell}_{j=1}\var_{q_{j}}(\tilde{\F},D)=\left(c_1(N_{\tilde{\F}})\right)\cdot D=\left(-\nu_p(\F)\,D \right)\cdot D=\nu_p(\F).
\end{equation}
On the other hand, $\tilde{\B}_j + D$ is  a balanced divisor of separatrices at  $q_{j} \in D$. Thus,
 we get from   Theorem \ref{polarexcess-gsv} that
\begin{equation}\label{beta_18}
\sum^{\ell}_{j=1}\Delta_{q_{j}}(\tilde{\F},D)=\sum^{\ell}_{j=1}\gsv_{q_{j}}(\tilde{\F},D)
-\sum^{\ell}_{j=1}(D,\tilde{\B}_j)_{q_{j}}.
\end{equation}
Now, we use \eqref{gsv-global-sum} to compute the sum of $\gsv$-indices along $D$:
\begin{eqnarray*}
\sum^{\ell}_{j=1}\gsv_{q_{j}}(\tilde{\F},D)&=&c_1(N_{\tilde{\F}})\cdot D-D\cdot D\\
&=&(-\nu_p(\F)D)\cdot D-D\cdot D\\
&=&  \nu_p(\F)+1.
\end{eqnarray*}
Since
\begin{equation*}
\sum^{\ell}_{j=1}(D,\tilde{\B}_j)_{q_{j}}=\sum^{\ell}_{j=1}\nu_p(\B_j)=\nu_p(\B),
\end{equation*}
using  Proposition \ref{prop:Equa-Ba},   equation \eqref{beta_18} turns into
\begin{equation}\label{beta_19}
\sum^{\ell}_{j=1}\Delta_{q_{j}}(\tilde{\F},D)=\nu_p(\F)+1-\nu_p(\B)=\tau_p(\F).
\end{equation}
It follows from  \eqref{beta_16}, \eqref{beta_17} and \eqref{beta_19} that
\begin{equation}\label{beta_20}
\sum^{\ell}_{j=1}\zeta_{q_{j}}(\tilde{\F},D)=\nu_p(\F)+\tau_p(\F).
\end{equation}
Combining \eqref{beta_total} and \eqref{beta_20}, we find
\begin{equation}\label{beta_21}
\sum^{\ell}_{j=1}\zeta_{q_{j}}(\tilde{\F})=\sum^{\ell}_{j=1}\zeta_{q_{j}}(\tilde{\F},\tilde{\B}_j)+\nu_p(\F)+\tau_p(\F).
\end{equation}
Now Lemma \ref{variation_beta} implies that
\begin{equation}
\label{beta_22}
\begin{array}{rcl}
\displaystyle \sum^{\ell}_{j=1}\zeta_{q_{j}}(\tilde{\F},\tilde{\B}_j)&=&\sum^{\ell}_{j=1}\zeta_p(\F,\B_j)-
(\nu_p(\F)+\tau_p(\F))\sum^{\ell}_{j=1}\nu_p(\B_j) \medskip\\ \displaystyle
&=&\zeta_{p}(\F)-(\nu_p(\F)+\tau_p(\F))\nu_p(\B).
\end{array}
\end{equation}
From \eqref{beta_21} and \eqref{beta_22},
\begin{eqnarray}
\zeta_{p}(\F)&=&\sum^{\ell}_{j=1}\zeta_{q_{j}}(\tilde{\F})+(\nu_p(\F)+\tau_p(\F))(\nu_p(\B)-1)\nonumber\\
&=&\sum^{\ell}_{j=1}\zeta_{q_{j}}(\tilde{\F})+(\nu_p(\F)+\tau_p(\F))(\nu_p(\F)-\tau_p(\F)) \nonumber \\
&=&\sum^{\ell}_{j=1}\zeta_{q_{j}}(\tilde{\F})+\nu^{2}_p(\F)-\tau^{2}_p(\F) \nonumber
\end{eqnarray}
and we are done.

\par \noindent \underline{Part 2}: \textit{The  dicritical case}.
Now $\tilde{\B}_j$ is  a balanced divisor of separatrices for $\tilde{\F}$ at $q_j$. Then, it  follows from Lemma \ref{variation_beta}  and Proposition \ref{prop:Equa-Ba} that
\[
\begin{array}{rcl}
\zeta_{p}(\F)&=&\displaystyle \sum_{j=1}^{\ell}\zeta_{q_{j}}(\F,\B_{j})  \\
&=& \displaystyle \sum^{\ell}_{j=1}\zeta_{q_{j}}(\tilde{\F},\tilde{\B}_j)+(\nu_p(\F)+1+\tau_p(\F))
\sum^{\ell}_{j=1}\nu_p(\B_j)  \\
&=&\displaystyle \sum^{\ell}_{j=1}\zeta_{q_{j}}(\tilde{\F})+(\nu_p(\F)+1+\tau_p(\F))\nu_p(\B) \\
&=&\displaystyle\sum^{\ell}_{j=1}\zeta_{q_{j}}(\tilde{\F})+(\nu_p(\F)+1+\tau_p(\F))(\nu_p(\F)+1-\tau_p(\F))  \\
&=& \displaystyle \sum^{\ell}_{j=1}\zeta_{q_{j}}(\tilde{\F})+ (\nu_p(\F)+1)^2-\tau^{2}_p(\F).
\end{array} \]
This completes the proof of the proposition.
\end{proof}

\section{Proof of Theorem  \ref{main-theorem}}
\label{proof-theoremI-section}

In this  section we compare     second variation   and   Baum-Bott indices and achieve a proof for
Theorem \ref{main-theorem}. We start with a look at
 reduced singularities:
\begin{lemma}
\label{lemma_reduced}
Let $\F$ be a reduced germ of foliation at $(\C,p)$. Then
$$\zeta_{p}(\F)=\bb_{p}(\F).$$
\end{lemma}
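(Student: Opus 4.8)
The plan is to reduce the statement to the two local models of a reduced singularity and to verify the identity in each case by an explicit residue computation. First, since $\F$ is already reduced, its minimal reduction of singularities is the identity map, so $\D=\{p\}$; thus $\F$ is non-dicritical, $\sep_p(\F)=\iso_p(\F)$ is finite with empty dicritical part, and the (unique) balanced divisor of separatrices is $\B=C$, where $C$ is the complete separatrix set, so that $\B_0=C$ and $\B_\infty=0$. Consequently every total residue-type index of $\F$ equals the index relative to $C$; in particular $\zeta_p(\F)=\var_p(\F,C)+\Delta_p(\F,C)$. Combining \eqref{index-sum} on convergent branches with the definition $\var_p(\F,B):=\cs_p(\F,B)+\gsv_p(\F,B)$ on the (possibly formal) ones, and noting that the intersection corrections in \eqref{cs-adjunction} and \eqref{gsv-adjunction} cancel when the two indices are added, one gets $\var_p(\F,C)=\cs_p(\F,C)+\gsv_p(\F,C)$; and since $\B_0=C$ and $\B_\infty=0$, Theorem \ref{polarexcess-gsv} gives $\gsv_p(\F,C)=\Delta_p(\F,C)$. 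It therefore suffices to prove
\[ \cs_p(\F,C)+2\Delta_p(\F,C)=\bb_p(\F). \]

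I then split into the two cases. If $p$ is non-degenerate, with model \eqref{non-degenerate}, then $\Delta_p(\F,C)=0$ (vanishing of the polar excess at a non-degenerate reduced singularity), and a direct evaluation of the residue in \eqref{camachosaddefinition} along $\{x=0\}$ and $\{y=0\}$ gives $\cs_p(\F,\{x=0\})=\lambda_1/\lambda_2$ and $\cs_p(\F,\{y=0\})=\lambda_2/\lambda_1$; since the two branches are smooth and transversal, \eqref{cs-adjunction} yields $\cs_p(\F,C)=\lambda_1/\lambda_2+\lambda_2/\lambda_1+2=\bb_p(\F)$, which is the desired identity. If $p$ is a saddle-node, with formal model \eqref{saddle-node-formal} and weak index $k+1$, then the polar excess is $0$ on the strong separatrix $\{x=0\}$ and $k$ on the weak separatrix $\{y=0\}$, so $\Delta_p(\F,C)=k$; applying the decomposition \eqref{decomposition} to $f=x$ and to $f=y$ in \eqref{saddle-node-formal} and reading off the residues gives $\cs_p(\F,\{x=0\})=0$ and $\cs_p(\F,\{y=0\})=\lambda$, so $\cs_p(\F,C)=\lambda+2$ by \eqref{cs-adjunction}, and hence $\cs_p(\F,C)+2\Delta_p(\F,C)=2k+2+\lambda=\bb_p(\F)$. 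This closes both cases.

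The only real computation is the saddle-node one: the two Camacho--Sad residues must be extracted carefully from \eqref{saddle-node-formal}, in the sign normalization under which $\bb_p(\F)=2k+2+\lambda$ --- the same sign of $\lambda$ then appears in $\cs_p(\F,\{y=0\})$, so the final identity does not depend on this choice. The one structural point worth spelling out is the assertion of the first step that, for a reduced --- hence trivially desingularized --- foliation, a balanced divisor of separatrices is exactly $C$; this is what legitimizes replacing each total index by the index relative to $C$ and, in particular, applying Theorem \ref{polarexcess-gsv} with $\B_\infty=0$.
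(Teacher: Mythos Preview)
Your proof is correct and follows essentially the same approach as the paper: both reduce to the two local models and verify the identity by direct computation of the indices. The only cosmetic difference is that the paper packages the verification as $\zeta_p=\var_p+\gsv_p$ and quotes the values $\var_p(\F)$, $\gsv_p(\F)$ from \cite{brunella1997II}, whereas you rewrite this as $\cs_p+2\Delta_p$ and compute $\cs_p$ on each branch via \eqref{camachosaddefinition}--\eqref{cs-adjunction} and $\Delta_p$ from the bulleted list preceding \eqref{delta-formula}; since $\var_p=\cs_p+\gsv_p$ and $\gsv_p=\Delta_p$ in the non-dicritical case, the two computations are equivalent. Your preliminary paragraph justifying $\B=C$ for a reduced foliation is a useful clarification that the paper leaves implicit.
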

\begin{proof} A reduced foliation is non-dicritical and so  $\Delta_{p}(\F) = \gsv_{p}(\F)$, which implies $\zeta_{p}(\F) = \var_{p}(\F) + \gsv_{p}(\F)$.
We only need to assemble information from \cite{brunella1997II} (see the two examples on p. 538).

On the one hand, when $p$ is non-degenerate with local model given by \eqref{non-degenerate}, we have:
\[ \var_{p}(\F) =  \bb_{p}(\F) = \frac{\lambda_{2}}{\lambda_{1}}+ \frac{\lambda_{1}}{\lambda_{2}} +2.\]
This implies our result, since  $\gsv_{p}(\F) = 0$.

On the other hand, for a saddle-node singularity, with  normal form as in \eqref{saddle-node-formal}, we have
\[ \var_{p}(\F) = k + 2 + \lambda \ \text{ and } \ \gsv_{p}(\F) = k , \]
while
\[ \bb_{p}(\F) = 2k +2 + \lambda.\]
\end{proof}

In the non-reduced  case, we have:
\begin{theorem}
\label{theorem-bb-zeta}
Let $\F$ be a germ of singular foliation at $(\C,p)$. Then
\[
\bb_{p}(\F) = \zeta_{p}(\F) + \sum_{q \in \cl{I}_{p}(\F)} \tau_{q}(\F)^{2},
\]
where the summation runs over all infinitely near points of $\F$ at $p$.
\end{theorem}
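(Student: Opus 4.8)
The plan is to prove the identity by induction on the number of blow-ups in the minimal reduction of singularities of $\F$ at $p$, using the first blow-up as the inductive step. This matches the two ``one blow-up'' formulas already at hand --- Proposition \ref{zeta-blowup} for $\zeta$ and the classical normal-bundle formula for $\bb$ --- together with the recursive description of $\cl{I}_p(\F)$. All the quantities involved ($\bb$, $\var$, $\Delta$, hence $\zeta$, and $\tau$) are defined verbatim for a germ at a smooth point carrying a normal-crossing exceptional divisor, so the statement is understood, and applied inductively, in that slightly relative form, exactly as Proposition \ref{zeta-blowup} already is.

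\emph{Base case.} If $\F$ is reduced at $p$, it is of second type, so $\tau_p(\F)=0$, and $\cl{I}_p(\F)=\{p\}$; the asserted identity then collapses to $\bb_p(\F)=\zeta_p(\F)$, which is Lemma \ref{lemma_reduced}.

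\emph{Inductive step.} Assume $p$ is singular and non-reduced, let $\sigma:(\tilde{\mathbb{C}}^2,D)\to(\C,p)$ be the blow-up at $p$, and let $q_1,\dots,q_\ell$ be the $D$-singular points of $\tilde\F$. By the recursive definition of infinitely near points, $\cl{I}_p(\F)=\{p\}\cup\bigcup_{j}\cl{I}_{q_j}(\tilde\F)$ (a disjoint union), and every $\tau$ on the right is computed on the same transform of $\F$; hence
\[\sum_{q\in\cl{I}_p(\F)}\tau_q(\F)^2=\tau_p(\F)^2+\sum_{j=1}^{\ell}\ \sum_{q\in\cl{I}_{q_j}(\tilde\F)}\tau_q(\tilde\F)^2.\]
Since the minimal reduction of $\tilde\F$ at each $q_j$ involves strictly fewer blow-ups, the induction hypothesis gives $\bb_{q_j}(\tilde\F)=\zeta_{q_j}(\tilde\F)+\sum_{q\in\cl{I}_{q_j}(\tilde\F)}\tau_q(\tilde\F)^2$ for every $j$. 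One now inserts two blow-up formulas: Proposition \ref{zeta-blowup}, which in both the dicritical and the non-dicritical cases reads $\zeta_p(\F)=\sum_j\zeta_{q_j}(\tilde\F)+m_p(\F)^2-\tau_p(\F)^2$ with $m_p(\F)$ as in Lemma \ref{variation_beta}; and the analogous Baum--Bott formula $\bb_p(\F)=\sum_j\bb_{q_j}(\tilde\F)+m_p(\F)^2$, which follows from the change of the normal bundle under blow-up, $c_1(N_{\tilde\F})=\sigma^*c_1(N_\F)-m_p(\F)\,D$, together with $D\cdot D=-1$ and \eqref{baumbott} on a compactification --- equivalently, from the known behavior of $\bb$ under blow-ups in \cite{brunella1997II,baum1972}. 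Summing the induction hypothesis over $j$ and substituting both formulas, the contributions $m_p(\F)^2$ and $\tau_p(\F)^2$ cancel and one is left with $\bb_p(\F)=\zeta_p(\F)+\sum_{q\in\cl{I}_p(\F)}\tau_q(\F)^2$, completing the induction.

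\emph{Main obstacle.} The only non-formal ingredient is the Baum--Bott blow-up formula with precisely the constant $m_p(\F)^2$ --- equal to $\nu_p(\F)^2$ in the non-dicritical case and to $(\nu_p(\F)+1)^2$ in the dicritical one --- the same constant that appears in Proposition \ref{zeta-blowup}, which is exactly what makes the two inductions match and the error terms cancel. I expect the bookkeeping in the dicritical case (where $D$ is not $\tilde\F$-invariant, the normal-bundle twist is $(\nu_p(\F)+1)D$, and one must still account for all $D$-singular points) to demand the most care; everything else is the additivity built into $\zeta$ and the recursion for $\cl{I}_p(\F)$.
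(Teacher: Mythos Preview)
Your proof is correct and follows essentially the same route as the paper: induction on the reduction process, with Lemma \ref{lemma_reduced} as base case and the combination of Proposition \ref{zeta-blowup} with the Baum--Bott blow-up formula \eqref{baum-bott-blowup} (cited from \cite{brunella1997II}) as the inductive step, using the recursive decomposition of $\cl{I}_p(\F)$. The only cosmetic difference is that the paper packages the induction by setting $\vartheta_p(\F)=\bb_p(\F)-\zeta_p(\F)-\sum_{q\in\cl{I}_p(\F)}\tau_q(\F)^2$ and showing $\vartheta_p(\F)=\sum_j\vartheta_{q_j}(\tilde\F)$, which is exactly your cancellation argument rewritten.
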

\begin{proof}
We recall  the behavior of the Baum-Bott index under blow-ups
\cite[Prop. 1]{brunella1997II}: if  $\sigma:(\tilde{\mathbb{C}}^2,D)\to (\C,p)$ is a blow-up at $p\in \C$ and  $q_1,\ldots,q_{\ell}$ are the $D$-singular points of
$\tilde{\F}$, then
\[ \bb_{p}(\F) = \sum_{j=1}^{\ell} \bb_{q_{j}}(\tilde{\F}) + c_{1}^{2}(N_{\tilde{\F}}).\]
This translates into
\begin{equation}
\label{baum-bott-blowup}
\bb_{p}(\F)=\begin{cases}
\displaystyle\sum^{\ell}_{j=1}\bb_{q_{j}}(\tilde{\F})+\nu^{2}_p(\F),  &\text{if $\sigma$ is non-dicritical}\\
\displaystyle\sum^{\ell}_{j=1}\bb_{q_{j}}(\tilde{\F})+(\nu_p(\F)+1)^2, &\text{if $\sigma$ is dicritical}.
\end{cases}
\end{equation}
Define
\[\vartheta_{p}(\F)=\bb_{p}(\F)-\zeta_{p}(\F) - \sum_{q \in \cl{I}_{p}(\F)} \tau_{q}(\F)^{2}.\]
We first observe that, if $\F$ is reduced, then $\cl{I}_{p}(\F) = \{p\}$ and $\tau_{p}(\F) = 0$,
resulting in $\vartheta_{p}(\F) = 0$ by the application of Lemma \ref{lemma_reduced}.
In general, if $\F$ is non-reduced, for a blow-up $\sigma$ as above,
we take into account the decomposition
\[\sum_{q \in \cl{I}_{p}(\F)} \tau_{q}^{2}(\F)  = \tau_{p}^{2}(\F)  + \sum_{j=1}^{\ell} \left(  \sum_{q \in \cl{I}_{q_j}(\tilde{\F})} \tau_{q}^{2}(\tilde{\F})  \right) \]
along with Propositions \ref{zeta-blowup}  and formula \eqref{baum-bott-blowup} in order to conclude that
\[\vartheta_{p}(\F) = \sum^{\ell}_{j=1}\vartheta_{q_{j}}(\tilde{\F}).\]
Finally, an induction argument gives that $\vartheta_{p}(\F) = 0$, proving the theorem.
\end{proof}

We recall that $\zeta_{p}(\F) = \var_{p}(\F) + \Delta_{p}(\F)$ and
 $\var_{p}(\F) =  \cs_{p}(\F) +\gsv_{p}(\F)$. When $\F$ is non-dicritical,
 $\Delta_{p}(\F)= \gsv_{p}(\F)$ and
  the theorem reads:
\begin{corollary}
\label{corollary-zeta-nondic}
If $\F$ is non-dicritical, then
\[
\bb_{p}(\F) = \cs_{p}(\F)+ 2 \gsv_{p}(\F) + \sum_{q \in \cl{I}_{p}(\F)} \tau_{q}(\F)^{2}.
\]
\end{corollary}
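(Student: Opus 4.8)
The plan is to derive this directly from Theorem \ref{theorem-bb-zeta} by unwinding the definition of the second variation index in the non-dicritical case. Theorem \ref{theorem-bb-zeta} gives
\[
\bb_{p}(\F) = \zeta_{p}(\F) + \sum_{q \in \cl{I}_{p}(\F)} \tau_{q}(\F)^{2},
\]
so it suffices to show that $\zeta_{p}(\F) = \cs_{p}(\F) + 2\gsv_{p}(\F)$ whenever $\F$ is non-dicritical. By definition $\zeta_{p}(\F) = \var_{p}(\F) + \Delta_{p}(\F)$, and the connecting relation for total indices inherited from \eqref{index-sum} gives $\var_{p}(\F) = \cs_{p}(\F) + \gsv_{p}(\F)$; hence $\zeta_{p}(\F) = \cs_{p}(\F) + \gsv_{p}(\F) + \Delta_{p}(\F)$.

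It then remains to identify $\Delta_{p}(\F)$ with $\gsv_{p}(\F)$, and this is exactly where non-dicriticality enters. When $\F$ is non-dicritical, $\dic_{p}(\F)$ is empty, so a primitive balanced divisor of separatrices has trivial pole part and its zero part is precisely the complete reduced separatrix curve $C = \iso_{p}(\F)$. Consequently each of the four total indices of Definition \ref{def-total-index} coincides with the classical total index relative to $C$. Applying the ``in particular'' clause of Theorem \ref{polarexcess-gsv} to this $C$ yields $\Delta_{p}(\F) = \Delta_{p}(\F,C) = \gsv_{p}(\F,C) = \gsv_{p}(\F)$. Substituting into the previous display gives $\zeta_{p}(\F) = \cs_{p}(\F) + 2\gsv_{p}(\F)$, and plugging this into the identity of Theorem \ref{theorem-bb-zeta} produces the claimed formula.

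There is no genuine difficulty here; this is a pure bookkeeping corollary of Theorem \ref{theorem-bb-zeta} together with Theorem \ref{polarexcess-gsv}. The only point worth a moment of care is checking that in the non-dicritical setting the balanced divisor furnished by Definition \ref{def-balanced-set} is literally the full separatrix curve ($\B_{\infty}=0$ and $\B_{0}=C$), so that the correction terms $(C,\B_{0}-C)_{p}$ and $(C,\B_{\infty})_{p}$ in Theorem \ref{polarexcess-gsv} both vanish and the total indices computed via $\B$ agree with the ordinary total indices.
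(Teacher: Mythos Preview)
Your argument is correct and follows exactly the paper's own derivation: the corollary is obtained from Theorem~\ref{theorem-bb-zeta} by writing $\zeta_{p}(\F) = \var_{p}(\F) + \Delta_{p}(\F) = \cs_{p}(\F) + \gsv_{p}(\F) + \Delta_{p}(\F)$ and then invoking $\Delta_{p}(\F) = \gsv_{p}(\F)$ in the non-dicritical case via Theorem~\ref{polarexcess-gsv}. Your extra care in checking that the balanced divisor reduces to the full separatrix curve is a welcome clarification but does not change the approach.
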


Since both $\Delta_{p}(\F)$  and $\gsv_{p}(\F)$ are integers, and $\gsv_{p}(\F) \geq 0$ when $\F$ is non-dicritical, the following corollary turns evident from Theorem \ref{theorem-bb-zeta}:
 \begin{corollary}
Let $\F$ be a germ of   foliation at $p\in\C$. Then
$$ \bb_{p}(\F)- \cs_{p}(\F) \in \ze.$$
This integer is non-negative when  $\F$ is non-dicritical.
 \end{corollary}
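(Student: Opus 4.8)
The plan is to deduce the statement directly from Theorem~\ref{theorem-bb-zeta}. Unravelling $\zeta_{p}(\F) = \var_{p}(\F) + \Delta_{p}(\F)$ together with $\var_{p}(\F) = \cs_{p}(\F) + \gsv_{p}(\F)$, that theorem yields
\[
\bb_{p}(\F) - \cs_{p}(\F) = \gsv_{p}(\F) + \Delta_{p}(\F) + \sum_{q \in \cl{I}_{p}(\F)} \tau_{q}(\F)^{2},
\]
so the whole matter reduces to showing that each term on the right is an integer. The sum $\sum_{q} \tau_{q}(\F)^{2}$ is immediately a (non-negative) integer, since by its very definition each $\tau_{q}(\F)$ is a finite sum of products of positive integers.

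Next I would check that $\Delta_{p}(\F)$ is an integer: by Definition~\ref{polar_index} the polar excess of a branch, $\Delta_{p}(\F,B) = (\mathcal{P}^{\F},B)_{p} - (\mathcal{P}^{d\hat{F}},B)_{p}$, is a difference of polar intersection numbers and hence lies in $\ze$; additivity then gives $\Delta_{p}(\F,\B) \in \ze$ for every divisor of separatrices, and in particular $\Delta_{p}(\F) = \Delta_{p}(\F,\B_{0}) - \Delta_{p}(\F,\B_{\infty}) \in \ze$. For $\gsv_{p}(\F)$ I would argue the same way: for a single branch $B$ the index $\gsv_{p}(\F,B) = \ord_{t=0}(k/g \circ \gamma(t))$ is an order of vanishing, the adjunction formula~\eqref{gsv-adjunction} only introduces the integer-valued intersection pairing, so $\gsv_{p}(\F,C) \in \ze$ for any curve of separatrices, whence $\gsv_{p}(\F) = \gsv_{p}(\F,\B_{0}) - \gsv_{p}(\F,\B_{\infty}) \in \ze$. (Theorem~\ref{polarexcess-gsv} gives this even more directly, since it writes $\gsv_{p}(\F,C)$ as $\Delta_{p}(\F,C)$ plus intersection numbers.) Putting the three facts together proves $\bb_{p}(\F) - \cs_{p}(\F) \in \ze$.

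For the last assertion I would specialize to the non-dicritical case. There the desingularization divisor has no dicritical components, so the pole part $\B_{\infty}$ of a balanced divisor is trivial; hence $\Delta_{p}(\F) = \Delta_{p}(\F,\B_{0}) \geq 0$ by Theorem~\ref{polarexcess-zero}, and $\gsv_{p}(\F) = \Delta_{p}(\F)$ by Theorem~\ref{polarexcess-gsv}, so $\gsv_{p}(\F) \geq 0$ as well. Then, either from the displayed identity or directly from Corollary~\ref{corollary-zeta-nondic},
\[
\bb_{p}(\F) - \cs_{p}(\F) = 2\gsv_{p}(\F) + \sum_{q \in \cl{I}_{p}(\F)} \tau_{q}(\F)^{2} \geq 0,
\]
a sum of non-negative integers.

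There is no serious obstacle here: the corollary is essentially a bookkeeping consequence of Theorem~\ref{theorem-bb-zeta}. The only point demanding any attention is the integrality of $\gsv_{p}(\F)$ when total indices are formed through a balanced divisor that may carry poles, and that is handled by the fact that the intersection pairing appearing in the adjunction formulas is $\ze$-valued.
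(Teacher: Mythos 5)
Your argument is correct and follows essentially the same route as the paper, which derives the corollary directly from Theorem~\ref{theorem-bb-zeta} using the integrality of $\gsv_{p}(\F)$ and $\Delta_{p}(\F)$ and the fact that $\gsv_{p}(\F)=\Delta_{p}(\F)\geq 0$ in the non-dicritical case. Your write-up merely makes explicit the integrality checks that the paper treats as evident.
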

Indeed, this corollary could  also be  proved by following Baum-Bott indices along the reduction of
of $\F$ --- equation \eqref{baum-bott-blowup} --- and comparing them with $\cs$-indices for the reduced singularities.  Baum-Bott's Theorem (equation \eqref{baumbott}) brings the following consequence for global foliations:
 \begin{corollary}
Let $\F$ be a foliation on a compact surface $M$. Then
$$\sum_{p\in\sing(\F)}\cs_p(\F) \in \ze.$$
\end{corollary}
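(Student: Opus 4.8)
The plan is to obtain the statement as a formal consequence of the previous corollary together with Baum--Bott's theorem \eqref{baumbott}. First I would note that, since $M$ is compact and $\F$ has isolated singularities, $\sing(\F)$ is finite, so $\sum_{p\in\sing(\F)}\cs_{p}(\F)$ is a genuine finite sum of well-defined total indices; recall that each $\cs_{p}(\F)$, defined through a primitive balanced divisor of separatrices (Definition \ref{def-total-index}), does not depend on the chosen divisor, so there is no ambiguity in the summands.

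Next I would invoke the preceding corollary, which states that $\bb_{p}(\F)-\cs_{p}(\F)\in\ze$ for every $p\in\sing(\F)$. Summing over the finitely many singular points yields
$$\sum_{p\in\sing(\F)}\bb_{p}(\F)-\sum_{p\in\sing(\F)}\cs_{p}(\F)\in\ze.$$
Then I would apply Baum--Bott's theorem \eqref{baumbott}, namely $\sum_{p\in\sing(\F)}\bb_{p}(\F)=c_{1}^{2}(N_{\F})$, and observe that the right-hand side is an integer, being the self-intersection number of the first Chern class of the line bundle $N_{\F}$ on the compact complex surface $M$. Consequently $\sum_{p\in\sing(\F)}\cs_{p}(\F)$ is the difference of two integers, hence an integer.

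There is essentially no obstacle here: the result is a purely formal combination of the \emph{local} integrality of $\bb_{p}(\F)-\cs_{p}(\F)$ with the \emph{global} integrality of $\sum_{p}\bb_{p}(\F)$. The only point worth a word of care is that the $\cs$-index in the statement is the total index of Definition \ref{def-total-index} (through balanced divisors) and that it is well defined, which was established earlier. Alternatively one could argue directly, tracking Baum--Bott indices along the reduction of singularities via \eqref{baum-bott-blowup} and comparing with the Camacho--Sad indices of the reduced singularities sitting on the exceptional divisor; but this route merely reproves the preceding corollary and offers no simplification.
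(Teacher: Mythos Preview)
Your proof is correct and follows exactly the paper's approach: the corollary is stated as a direct consequence of Baum--Bott's theorem \eqref{baumbott} applied to the preceding local integrality result $\bb_{p}(\F)-\cs_{p}(\F)\in\ze$. The alternative route you mention, via \eqref{baum-bott-blowup}, is precisely the one the paper indicates as an alternative proof of that preceding corollary.
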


We have now all elements to complete the proof of Theorem \ref{main-theorem}:
\begin{proof}(of Theorem \ref{main-theorem})
The first statement follows straight from Theorem \ref{theorem-bb-zeta}.
If $\F$ is of second type at $p$,     so is it at all   infinitely near points, implying
 $\tau_{q}(\F) = 0$ for all $q \in \cl{I}_{p}(\F)$ and $\bb_{p}(\F) = \zeta_{p}(\F)
 = \var_{p}(\F) + \Delta_{p}(\F)$.
 Conversely, the equality of indices implies that the summation in Theorem  \ref{theorem-bb-zeta} vanishes,
 giving, in particular, that $\tau_{p}(\F) = 0$ and that $\F$ is of second type.
 The second statement is then a  consequence of Proposition \ref{total-delta-zero}.
\end{proof}

\section{Logarithmic foliations on the complex projective plane}
\label{logarithmic-section}
Let $\cl{F}$ be a holomorphic foliation on the complex projective plane $\pe^{2} = \pcn{2}$.  The degree of $\cl{F}$ is the number $\deg(\cl{F})$ of
tangencies  between $\cl{F}$ and a generic line. The question concerning the existence
of a bound for the degree $\deg(S)$ of an $\F$-invariant curve $S$ in terms of  $\deg(\cl{F})$ is known
 in foliation theory as   Poincar\'e problem \cite{poincare1891}. When all singularities of $\F$ over $S$ are non-dicritical, it is proven in
\cite{carnicer1994} that the inequality $\deg(S) \leq \deg(\cl{F}) + 2$ holds.
The limit case for this bound is reached by \textit{logarithmic foliations}, those defined
by logarithmic $1-$forms, as explained next.
Suppose that an $\F$-invariant   algebraic curve $S\subset {\mathbb P}^2 $ is defined by a homogeneous polynomial equation
$P=P_1P_2\cdots P_n=0$,
 where each polynomial $P_i$ is irreducible of degree $d_i$.  Suppose further that that  $\F$   is non-dicritical at each point  $q\in S$. Then the following statements are equivalent   \cite{brunella1997II,cano2015,cerveau2013}:
\begin{enumerate}
\item $\deg (S)=\deg({\mathcal F})+2$.

\item There are residues $\lambda_i\in {\mathbb C}^*$ with $\sum_{i=1}^n\lambda_id_i=0$ such that $\mathcal F$ is given by $\omega=0$, where $\omega$ is the global closed logarithmic $1$-form in ${\mathbb P}^2_{\mathbb C}$ defined by
    $$
    \omega=\sum_{i=1}^n\lambda_i \frac{dP_i}{P_i}.
    $$
\item The foliation ${\mathcal F}$ is a generalized curve foliation at each  $p \in \sing(\F) \cap S$ and $S$ contains all branches  of $\sep_{p}(\F)$ at each $p$.
\end{enumerate}

As an application of Theorem \ref{main-theorem}, we propose the following characterization of
non-dicritical logarithmic foliations:
\begin{proposition}
\label{prop-logarithmic}
Let $\F$ be a holomorphic foliation  on $\pe^{2}$. Suppose that $\F$ leaves invariant
an algebraic curve $S$  such that:
\begin{itemize}
\item
 $\sing(\F) \subset  S$;
\item all points $p \in \sing(\F)$ are non-dicritical and of second type;
\item $S$ contais all the local branches of $\sep_{p}(\F)$ at each $p \in \sing(\F)$.
\end{itemize}
Then $\deg (S)=\deg({\mathcal F})+2$ and $\F$ is a logarithmic foliation.
\end{proposition}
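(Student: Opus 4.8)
The plan is to reduce the statement to item (1) of the equivalence recalled above, namely $\deg(S)=\deg(\F)+2$, after which item (2) immediately gives that $\F$ is logarithmic. Write $d=\deg(\F)$ and $s=\deg(S)$, and recall that the normal bundle of a degree $d$ foliation on $\pe^{2}$ satisfies $c_{1}^{2}(N_{\F})=(d+2)^{2}$, while $c_{1}(N_{\F})\cdot S=(d+2)s$ and $S\cdot S=s^{2}$. The hypotheses are exactly what is needed to make the local residue indices of $\F$ along $S$ coincide with the total indices: first, $\sing(\F)\cap S=\sing(\F)$, so the global index theorems see every singular point; second, since $S$ is algebraic all of its local branches are convergent, and because $S$ contains every branch of $\sep_{p}(\F)$, the foliation $\F$ has no purely formal separatrix at any $p$ and, locally at each $p\in\sing(\F)$, the reduced curve $S$ is precisely the complete curve of separatrices (a balanced divisor in the non-dicritical case). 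Hence $\var_{p}(\F,S)=\var_{p}(\F)$ and $\gsv_{p}(\F,S)=\gsv_{p}(\F)$, the total indices of Definition \ref{def-total-index}.

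Next I would use the local characterization. Since $\F$ is of second type and non-dicritical at each $p\in\sing(\F)$, Theorem \ref{main-theorem} together with $\Delta_{p}(\F)=\gsv_{p}(\F)$ (Theorem \ref{polarexcess-gsv}) gives
\[ \bb_{p}(\F)=\var_{p}(\F)+\gsv_{p}(\F)\qquad\text{for every }p\in\sing(\F). \]
Summing over $\sing(\F)$ and invoking the three global formulas --- Baum--Bott \eqref{baumbott}, the variation formula \eqref{suwa}, and the $\gsv$ formula \eqref{gsv-global-sum}, all applied with $C=S$ --- I obtain
\[ (d+2)^{2}=\sum_{p}\bb_{p}(\F)=\sum_{p}\var_{p}(\F)+\sum_{p}\gsv_{p}(\F)=(d+2)s+\bigl((d+2)s-s^{2}\bigr). \]

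This identity rearranges to $s^{2}-2(d+2)s+(d+2)^{2}=0$, that is, $\bigl(s-(d+2)\bigr)^{2}=0$, so $s=d+2$. This is item (1), hence item (2) applies and $\F=\{\omega=0\}$ for a closed logarithmic $1$-form $\omega=\sum_{i}\lambda_{i}\,dP_{i}/P_{i}$; note in passing that item (3) then shows $\F$ is in fact a generalized curve foliation at each $p\in\sing(\F)$, even though only the second type property was assumed a priori (so Brunella's original theorem is not directly available at the start). The one point requiring genuine care --- more a bookkeeping matter than a deep obstacle --- is precisely the identification in the first paragraph of the local indices along $S$ with the total indices of $\F$: one must verify that ``$S$ contains all local branches of $\sep_{p}(\F)$'' simultaneously rules out purely formal separatrices (so the total index of Definition \ref{def-total-index} is computed by the reduced curve $S$ itself) and makes $\sing(\F)\cap S$ the full singular set entering \eqref{baumbott}, \eqref{suwa} and \eqref{gsv-global-sum}. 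Once this is settled, the argument is the short computation above.
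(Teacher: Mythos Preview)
Your proof is correct and is essentially the same as the paper's. The only cosmetic difference is that the paper writes the local identity as $\bb_{p}(\F)=\cs_{p}(\F)+2\,\gsv_{p}(\F)$ (via Corollary~\ref{corollary-zeta-nondic}) and then applies the Camacho--Sad theorem~\eqref{cs-theorem} and the $\gsv$ formula~\eqref{gsv-global-sum}, whereas you use the equivalent form $\bb_{p}(\F)=\var_{p}(\F)+\gsv_{p}(\F)$ and invoke~\eqref{suwa} and~\eqref{gsv-global-sum}; since $\var=\cs+\gsv$, both routes produce the same quadratic $(s-(d+2))^{2}=0$.
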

\begin{proof}
Denote by $d_{0} = \deg (S)$ and $d = \deg({\mathcal F})$.
On the one hand, by Baum-Bott's Theorem (equation \eqref{baumbott}), we have
\[\sum_{p \in \sing(\F)} \bb_{p}(\F) = (d + 2)^{2}.\]
On the other hand, by Theorem \ref{main-theorem} and formulas
\eqref{cs-theorem} and \eqref{gsv-global-sum},
\[
\begin{array}{rcl}
\displaystyle \sum_{p \in \sing(\F)} \bb_{p}(\F) & = &\displaystyle \sum_{p \in \sing(\F)} \cs_{p}(\F) + 2  \gsv_{p}(\F) \medskip \\
& = & d_{0}^{2} + 2 \left((d + 2)d_{0} - d_{0}^{2}\right) \medskip \\
& =  & 2  (d + 2)d_{0} - d_{0}^{2}.
\end{array}
\]
These two equations give $d_{0} = d + 2$, which implies that $\F$ is logarithmic.
\end{proof}

Actually, Proposition \ref{prop-logarithmic} can be stated in a more general setting, in the spirit of
\cite{brunella1997II} and \cite{licanic2000},  switching
$\pe^{2}$ to a compact projective surface $M$ with Picard group ${\rm Pic}(M) = \ze$.
We need a definition:  a meromorphic $1-$form $\omega$   on a complex manifold $M$ is \textit{logarithmic}
if both $\omega$ and $d \omega$ have simple poles over $(\omega)_{\infty}$.  We can then state:
\begin{proposition}
\label{prop-logarightmic-picZ}
 Let $M$ be a compact projective surface  with Picard group ${\rm Pic}(M) = \ze$.
Let $\F$ be a holomorphic foliation on $M$ that leaves invariant
a compact curve $S$  satisfying the conditions listed in Proposition \ref{prop-logarithmic}.
Then $\F$ is induced by a closed logarithmic $1-$form having simple poles over $S$.
\end{proposition}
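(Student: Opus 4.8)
The strategy is to adapt the proof of Proposition~\ref{prop-logarithmic}, replacing the degree of a curve by the associated line bundle and invoking ${\rm Pic}(M)=\ze$ at the decisive point. I would (a)~prove that $N_\F\cong\mathcal{O}_M(S)$, (b)~use this to realize $\F$ as the foliation induced by a global meromorphic $1$-form $\eta$ with at most simple poles along $S$, and (c)~show that $\eta$ is closed; that $\eta$ is logarithmic comes out along the way.

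For step (a), fix $p\in\sing(\F)$. As $p$ is non-dicritical and $S$ contains every local branch of $\sep_{p}(\F)$, near $p$ the curve $S$ is the complete separatrix curve, so the total indices satisfy $\cs_{p}(\F)=\cs_{p}(\F,S)$ and $\gsv_{p}(\F)=\gsv_{p}(\F,S)$; and since $\F$ is of second type at $p$ --- hence at every infinitely near point --- we have $\tau_{q}(\F)=0$ for all $q\in\cl{I}_{p}(\F)$, so Corollary~\ref{corollary-zeta-nondic} gives $\bb_{p}(\F)=\cs_{p}(\F,S)+2\,\gsv_{p}(\F,S)$. Summing over $\sing(\F)=\sing(\F)\cap S$ and using Baum--Bott's formula~\eqref{baumbott}, the Camacho--Sad formula~\eqref{cs-theorem} and the global $\gsv$ formula~\eqref{gsv-global-sum} yields $c_1^2(N_\F)=2\,c_1(N_\F)\cdot S-S\cdot S$, i.e.\ $(c_1(N_\F)-[S])^2=0$. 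Since ${\rm Pic}(M)=\ze$, the class $c_1(N_\F)-[S]$ is an integer multiple of an ample generator $H$ with $H\cdot H>0$; hence it is $0$ and $N_\F\cong\mathcal{O}_M(S)$. (As a byproduct, $c_1(N_\F)\cdot S=S\cdot S$ forces $\sum_{p}\gsv_{p}(\F,S)=0$; each term being $\Delta_{p}(\F)\ge0$ by Theorems~\ref{polarexcess-gsv} and~\ref{polarexcess-zero}, we get $\Delta_{p}(\F)=0$, so by Proposition~\ref{total-delta-zero} the foliation $\F$ is a generalized curve at each of its singularities --- the analogue of condition~(3) on $\pe^{2}$.)

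For steps (b)--(c): let $\omega\in H^{0}(M,\Omega^{1}_{M}\otimes N_\F)$ define $\F$ and let $s$ be the section of $\mathcal{O}_M(S)\cong N_\F$ with divisor $S$. Then $\eta:=\omega\otimes s^{-1}$ is a global meromorphic $1$-form inducing $\F$, with polar divisor contained in the reduced curve $S$, so $\eta$ has simple poles along each component $S_{i}$ of $S$. Since $S$ is $\F$-invariant, at a smooth point of $S$ both $\eta$ and $ds$ annihilate $TS$, hence are proportional; a local computation using this cancellation shows $d\eta$ also has at most simple poles along $S$, i.e.\ $\eta$ is logarithmic. Moreover the Poincar\'e residue of $\eta$ along $S_{i}$ is a holomorphic function on the compact curve $S_{i}$, hence a constant $\lambda_{i}\in\co$; feeding this back shows $d\eta$ is holomorphic at every smooth point of $S$, and a meromorphic $2$-form with polar set inside the reduced curve $S$ and no pole along any $S_{i}$ is holomorphic, so $d\eta\in H^{0}(M,\Omega^{2}_{M})$.

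Proving $d\eta=0$ is the main obstacle, and here ${\rm Pic}(M)=\ze$ enters again via $q(M)=0$. On $M\setminus S$ the form $\eta$ is holomorphic, so $d\eta$ is exact there; its de Rham class therefore lies in the kernel of $H^{2}(M,\co)\to H^{2}(M\setminus S,\co)$, which is spanned by the classes $[S_{i}]$ and thus contained in $H^{1,1}(M)$. On the other hand $[d\eta]\in H^{2,0}(M)$, being represented by a holomorphic $2$-form, and since $M$ is projective, hence K\"ahler, $H^{2,0}(M)\cap H^{1,1}(M)=0$; so $[d\eta]=0$, and a harmonic (holomorphic) $2$-form with trivial class vanishes. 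Hence $d\eta=0$ and $\F$ is induced by the closed logarithmic $1$-form $\eta$, whose poles over $S$ are simple. (Alternatively, $d\eta=0$ follows from $\int_{M}d\eta\wedge\overline{d\eta}=\lim_{\varepsilon\to0}\int_{\partial U_{\varepsilon}}\eta\wedge\overline{d\eta}=0$ over the boundary of an $\varepsilon$-tube $U_{\varepsilon}$ around $S$, using that the residues are constant while $d\eta$ restricts to $0$ on each $S_{i}$.) The point needing care in a full write-up is the identification of $\ker\big(H^{2}(M)\to H^{2}(M\setminus S)\big)$ when $S$ is not a normal crossings divisor, handled by passing to the normalizations of the components or to a suitable blow-up.
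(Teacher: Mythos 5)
Your steps (a) and (b) coincide with the paper's: the same index computation gives $(c_1(N_{\F})-[S])^2=0$, and ${\rm Pic}(M)=\ze$ then forces $N_{\F}\cong\cl{O}_M(S)$, producing a global meromorphic $1$-form $\eta$ inducing $\F$ with reduced polar divisor $S$ and empty zero divisor. For closedness you then genuinely diverge: the paper pulls $\eta$ back by a desingularization $\pi$ of $S$, uses non-dicriticality and the second-type hypothesis --- via Proposition \ref{prop:Equa-Ba}, $\nu_{p}(\F)=\nu_{p}(C)-1$ --- to verify that $\pi^{*}\eta$ still has only first-order poles along the normal-crossings curve $\pi^{-1}(S)$, and quotes Deligne's theorem; you stay on $M$ and run a residue-plus-Hodge-theory argument. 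Your Hodge-theoretic endgame is sound: once $d\eta$ is a global holomorphic $2$-form whose class dies in $H^{2}(M\setminus S,\co)$, it vanishes, and the kernel identification you flag at the end is not in fact the delicate point (the kernel of $H^{2}(M)\to H^{2}(M\setminus S)$ is spanned by the classes $[S_i]$ for any reduced divisor, normal crossings or not).

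The genuine gap is the assertion that the Poincar\'e residue of $\eta$ along $S_{i}$ ``is a holomorphic function on the compact curve $S_{i}$, hence a constant.'' Your local computation establishes holomorphy of the residue only at smooth points of $S$ lying on a single component and regular for $\F$. At the finitely many remaining points --- singular points of $S_{i}$, intersections with other components, and above all the points of $\sing(\F)$ --- you must show that the residue extends holomorphically (equivalently, stays bounded) on the normalization of $S_{i}$; a holomorphic function on a compact curve minus finitely many points need not be constant. Constancy of the residue is exactly equivalent to $d\eta$ being pole-free along $S_{i}$, so this is not a technicality but the decisive local step, and it is precisely where the hypotheses ``non-dicritical, of second type, $S$ contains all separatrices'' must be used: at a saddle-node with weak separatrix $\{y=0\}\subset S$ the residue along that branch is $x^{k}$, non-constant, and at a non-reduced singularity the residue can a priori acquire a pole. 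This step is the counterpart of the paper's pole-order control through Proposition \ref{prop:Equa-Ba}. You do correctly observe that the index computation forces $\Delta_{p}(\F)=0$, hence that $\F$ is a generalized curve foliation, but you never feed this back into the local analysis, which is where it is needed. As written the proof is incomplete at this point; either supply that local estimate, or follow the paper and resolve $S$ first so that Deligne's theorem applies.
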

\begin{proof}
Summing up  $\bb_{p}(\F) = \cs_{p}(\F) + 2  \gsv_{p}(\F)$ over all $p \in \sing(\F)$,
we find
\[ c_{1}(N_{\F})^{2} = S \cdot S + 2(  c_{1}(N_{\F}) \cdot S  - S \cdot S) \Rightarrow ( c_{1}(N_{\F}) - c_{1}(\cl{O}_{S}))^{2} = 0 .\]
Since ${\rm Pic}(M) = \ze$, the line bundle $L = N_{\F}^{*} \otimes \cl{O}_{S}$ is trivial, that is,
$N_{\F}  = \cl{O}_{S}$. Now, the proof   follows the steps of Proposition 10 in \cite{brunella1997II}.
We have that $\F$ is induced by a  meromorphic $1-$form $\omega$ on $M$ with empty zero divisor and whose pole divisor is $S$ with order one. The comment preceding that result
 also works here:  if $\sigma$ is a blow-up at $p \in \sing(\F)$,
 then $\sigma^{*} \omega$ has a pole of first order over $\sigma^{-1}(S)$. This is because
  $\F$ is non-dicritical and
second type, implying  that $C$, the complete curve
of   separatrices at $p$, satisfies   $\nu_{0}(\F) = \nu_{0}(C) -1$   by Proposition \ref{prop:Equa-Ba}.  Finally,   taking
  $\pi: \tilde{M} \to M$  a desingularization for $S$, the curve
$\tilde{S} = \pi^{-1}(S)$ has normal crossings and $\tilde{\omega} = \pi^{*} \omega$ has a simple pole over $\tilde{S}$. Since $\tilde{S}$ is invariant by $\tilde{\omega}$,
the exterior derivative $d \tilde{\omega} $ also has a simple pole over  $\tilde{S}$.
 That is, $\tilde{\omega}$ is a logarithmic form and   Deligne's Theorem \cite{deligne1971} asserts that it is closed, giving  that $\omega$ is also closed.
 \end{proof}

\section{Examples}
\label{examples-section}
We present two examples that give a numerical illustration of our results.
\begin{example}[Suzuki's example]
Let $\F$ be the germ of foliation at $(\C,0)$ defined by $$\omega=(y^3+y^2-xy)dx-(2xy^2+xy-x^2)dy.$$
 $\F$ is a dicritical generalized curve foliation   having the transcendental first integral
 $$ \frac{x}{y}\exp\left( \frac{y(y+1)}{x}\right)$$
and  admitting no meromorphic first integral    \cite{suzuki1978}.
After one blow-up, the foliation is regular and has a unique leaf that is tangent to the exceptional divisor with   tangency order one. It corresponds to the unique isolated separatrix $B_{1}$. The transverse leaves
give rise to dicritical separatrices. Chose one of them and   denote by $B_{2}$ the corresponding dicritical separatrix. Then $\B = B_{1} + B_{2}$ is
a balanced divisor of separatrices.
 It   follows from \eqref{baum-bott-blowup} that
$$\bb_{0}(\F)=(\nu_0(\F)+1)^2=(2+1)^2=9.$$
The following   simple calculation follow from \eqref{beta_10}:
\[\begin{array}{ccc}
\left. \begin{array}{l}
\var_{0}(\F,B_1)= 6 \medskip \\
\var_{0}(\F,B_2)=3
\end{array} \right\} & \Rightarrow &
\var_{0}(\F) = \var_{0}(\F,B_1 +  B_2)=  9.\\
\end{array} \]
Since we are in the generalized curve case,  $\Delta_{0}(\F)=0$ and Theorem \ref{main-theorem}
is verified.
\begin{center}
\begin{figure}[t]
\includegraphics[scale=0.6]{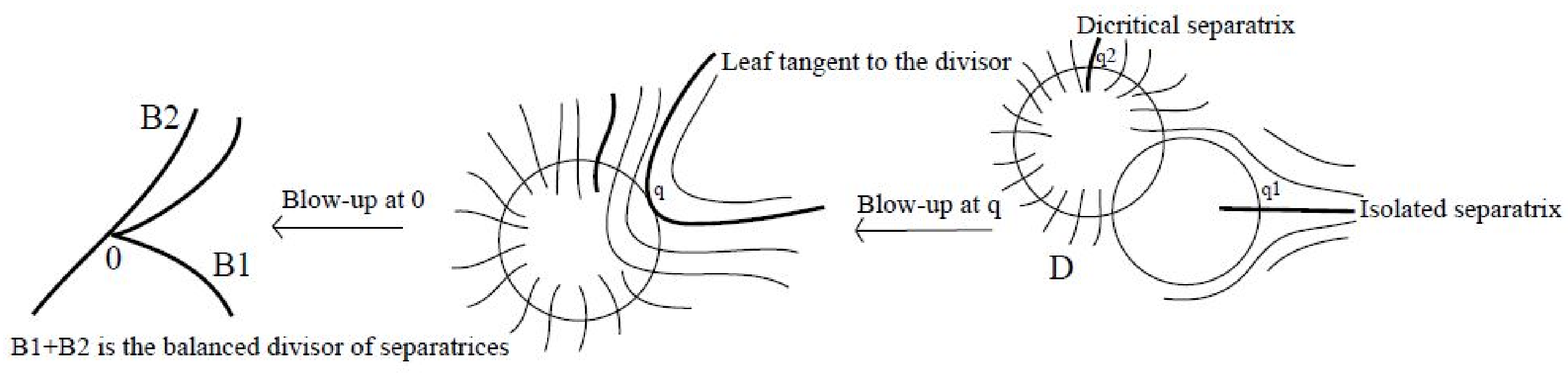}
\end{figure}
\end{center}

\end{example}

\begin{example}
Let $\F$ be the Ricatti foliation at $(\C,0)$ given by
$$\omega=(y^2+xy+x^2)dx+x^2dy.$$
  $\F$ is non-dicritical and has two separatrices $B_1:\{y=-x\}$ and $B_2:\{x=0\}$.
After one blow-up, the foliation has two reduced singularities.  The one corresponding to $B_1$, say $p_{1}$,   is a tangent saddle-node with weak index 2.   The other singularity, $p_{2}$,   is hyperbolic with eigenvalue ratio $-1$. Therefore $\F$ is not second type and $\tau_{0}(\F)=1$. The divisor $\B = B_{1} + B_{2}$ is a balanced one. Simple calculations using
\eqref{beta_10} lead to:
\begin{center}
\begin{figure}[b]
\includegraphics[scale=0.6]{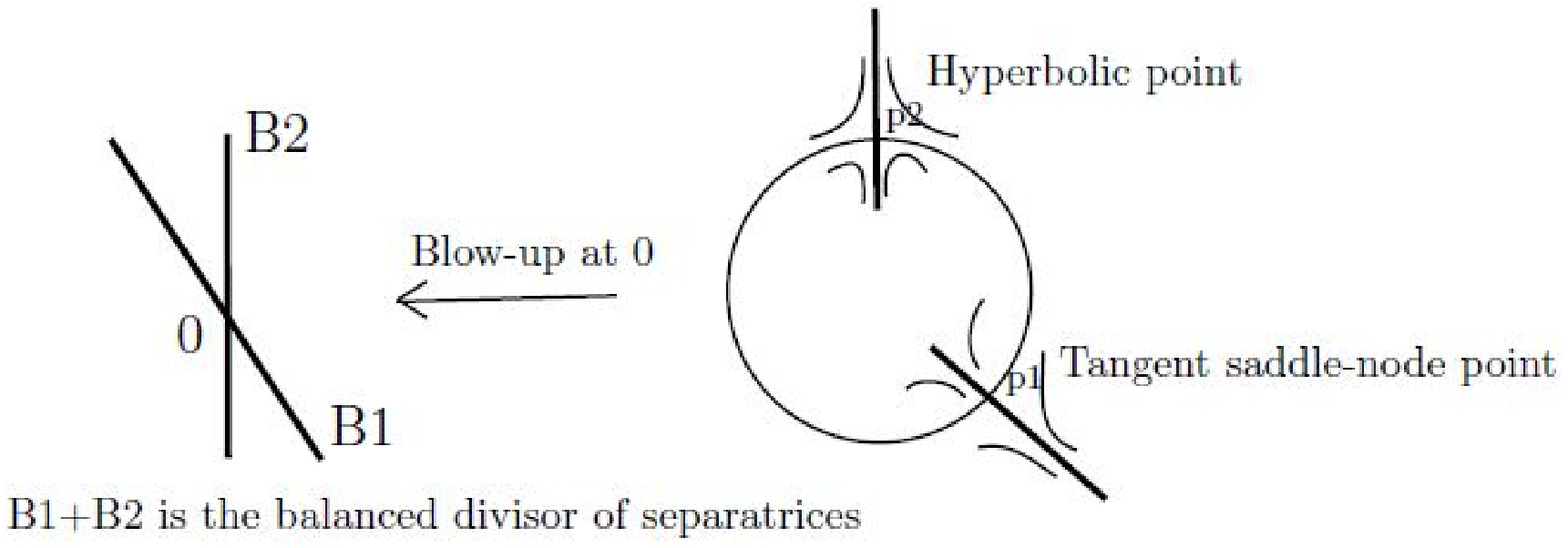}
\end{figure}
\end{center}
\[\begin{array}{ccc}
\left. \begin{array}{l}
\var_{0}(\F,B_1)= 3 \medskip \\
\var_{0}(\F,B_2)=2
\end{array} \right\} & \Rightarrow &
\var_{0}(\F) = \var_{0}(\F,B_1 +  B_2)=  5\\
\end{array} \]

\[\begin{array}{ccc}
\left. \begin{array}{l}
\Delta_{0}(\F,B_1)= 1 \medskip \\
\Delta_{0}(\F,B_2)=1
\end{array} \right\} & \Rightarrow &
\Delta_{0}(\F) = \Delta_{0}(\F,B_1 +  B_2)=  2.\\
\end{array} \]

The expression of Theorem \ref{theorem-bb-zeta} is   verified, since,
   from \eqref{baum-bott-blowup},
$$\bb_{0}(\F)=\bb_{p_1}(\F)+\bb_{p_2}(\F)+\nu_0(\F)^2=4+0+2^2=8.$$

\end{example}

\noindent{\it\bf Acknowledgements.}  The authors thank J.-P. Brasselet for early conversations that led to this article.

\bibliographystyle{plain}
\bibliography{referencias}

\begin{thebibliography}{10}

\bibitem{baum1972}
P.~Baum and R.~Bott.
\newblock Singularities of holomorphic foliations.
\newblock {\em J. Differential Geometry}, 7:279--342, 1972.

\bibitem{brunella1997I}
M.~Brunella.
\newblock Feuilletages holomorphes sur les surfaces complexes compactes.
\newblock {\em Ann. Sci. \'Ecole Norm. Sup. (4)}, 30(5):569--594, 1997.

\bibitem{brunella1997II}
M.~Brunella.
\newblock Some remarks on indices of holomorphic vector fields.
\newblock {\em Publ. Mat.}, 41(2):527--544, 1997.

\bibitem{camacho1984}
C.~Camacho, A.~Lins~Neto, and P.~Sad.
\newblock Topological invariants and equidesingularization for holomorphic
  vector fields.
\newblock {\em J. Differential Geom.}, 20(1):143--174, 1984.

\bibitem{camacho1982}
C.~Camacho and P.~Sad.
\newblock Invariant varieties through singularities of holomorphic vector
  fields.
\newblock {\em Ann. of Math. (2)}, 115(3):579--595, 1982.

\bibitem{cano2015}
F.~Cano, N.~Corral, and R.~Mol.
\newblock Local polar invariants for plane singular foliations.
\newblock {\em arXiv:1508.06882}, 2015.

\bibitem{carnicer1994}
M.~Carnicer.
\newblock The {P}oincar\'e problem in the nondicritical case.
\newblock {\em Ann. of Math. (2)}, 140(2):289--294, 1994.

\bibitem{cerveau2013}
D.~Cerveau.
\newblock Formes logarithmiques et feuilletages non dicritiques.
\newblock {\em J. Singul.}, 9:50--55, 2014.

\bibitem{deligne1971}
P.~Deligne.
\newblock Th\'eorie de {H}odge. {II}.
\newblock {\em Inst. Hautes \'Etudes Sci. Publ. Math.}, (40):5--57, 1971.

\bibitem{genzmer2007}
Y.~Genzmer.
\newblock Rigidity for dicritical germ of foliation in {$\Bbb C^2$}.
\newblock {\em Int. Math. Res. Not. IMRN}, (19):Art. ID rnm072, 14, 2007.

\bibitem{genzmer2016}
Y.~Genzmer and R.~Mol.
\newblock The poincar\'e problem in the dicritical case.
\newblock {\em arXiv:1608.07217}, 2015.

\bibitem{gomezmont1991}
X.~G{\'o}mez-Mont, J.~Seade, and A.~Verjovsky.
\newblock The index of a holomorphic flow with an isolated singularity.
\newblock {\em Math. Ann.}, 291(4):737--751, 1991.

\bibitem{khanedani1997}
B.~Khanedani and T.~Suwa.
\newblock First variation of holomorphic forms and some applications.
\newblock {\em Hokkaido Math. J.}, 26(2):323--335, 1997.

\bibitem{licanic2000}
S.~Licanic.
\newblock Logarithmic foliations on compact algebraic surfaces.
\newblock {\em Bol. Soc. Brasil. Mat. (N.S.)}, 31(1):113--125, 2000.

\bibitem{linsneto1988}
A.~Lins~Neto.
\newblock Algebraic solutions of polynomial differential equations and
  foliations in dimension two.
\newblock In {\em Holomorphic dynamics ({M}exico, 1986)}, volume 1345 of {\em
  Lecture Notes in Math.}, pages 192--232. Springer, Berlin, 1988.

\bibitem{martinet1982}
J.~Martinet and J.-P. Ramis.
\newblock Probl\`emes de modules pour des \'equations diff\'erentielles non
  lin\'eaires du premier ordre.
\newblock {\em Inst. Hautes \'Etudes Sci. Publ. Math.}, (55):63--164, 1982.

\bibitem{mattei2004}
J.-F. Mattei and E.~Salem.
\newblock Modules formels locaux de feuilletages holomorphes.
\newblock {\em arXiv:math/0402256}, 2004.

\bibitem{mol2002}
R.~Mol.
\newblock Meromorphic first integrals: some extension results.
\newblock {\em Tohoku Math. J. (2)}, 54(1):85--104, 2002.

\bibitem{mol2016}
R.~Mol and R.~Rosas.
\newblock Differentiable equisingularity of holomorphic foliations.
\newblock {\em arXiv:1611.03004}, 2016.

\bibitem{poincare1891}
H.~Poncar\'e.
\newblock Sur l'int\'egration alg\'ebrique des \'equations diff\'erentielles du
  premier ordre et du premier degr\'e.
\newblock {\em Rend. Circ. Mat. Palermo}, 5:161--191, 1891.

\bibitem{seidenbeg1968}
A.~Seidenberg.
\newblock Reduction of singularities of the differential equation
  {$A\,dy=B\,dx$}.
\newblock {\em Amer. J. Math.}, 90:248--269, 1968.

\bibitem{suwa1998}
T.~Suwa.
\newblock {\em Indices of vector fields and residues of singular holomorphic
  foliations}.
\newblock Actualit\'es Math\'ematiques. [Current Mathematical Topics]. Hermann,
  Paris, 1998.

\bibitem{suzuki1978}
M.~Suzuki.
\newblock Sur les int\'egrales premi\`eres de certains feuilletages analytiques
  complexes.
\newblock In {\em Fonctions de plusieurs variables complexes, {III} ({S}\'em.
  {F}ran\c{c}ois {N}orguet, 1975--1977)}, volume 670 of {\em Lecture Notes in
  Math.}, pages 53--79, 394. Springer, Berlin, 1978.

\end{thebibliography}

\end{document}